\documentclass{article}
\usepackage{fullpage, amsfonts, amsthm, amsmath, setspace, graphicx, amssymb, float}
\usepackage{colonequals}

\newtheorem{theorem}{Theorem}[section]
\newtheorem{lemma}[theorem]{Lemma}
\newtheorem{fact}[theorem]{Fact}
\newtheorem{proposition}[theorem]{Proposition}
\newtheorem{corollary}[theorem]{Corollary}
\newtheorem{case}{Case}[theorem]

\newtheorem{claim}{Claim}[theorem]
\numberwithin{subcase}{case}
\numberwithin{claim}{theorem}

\newenvironment{definition}[1][Definition:]{\begin{trivlist}
\item[\hskip \labelsep {\bfseries #1}]}{\end{trivlist}}
\newenvironment{example}[1][Example:]{\begin{trivlist}
\item[\hskip \labelsep {\bfseries #1}]}{\end{trivlist}}

\newenvironment{note}[1][Note:]{\begin{trivlist}
\item[\hskip \labelsep {\bfseries #1}]}{\end{trivlist}}
\title{\bf Notes on the Level Curves of a Meromorphic Function}
\author{Trevor Richards\thanks{Email: trevor1r@ufl.edu}\\\vspace{6pt} {\em Department of Mathematics, University of Florida, Gainesville, United States}}

\begin{document}

\maketitle

\begin{abstract}
The subject of this paper is the bounded level curves of a meromorphic function $f$ with domain $G$ such that each component of $\partial{G}$ consists of a level curve of $f$.  (A primary example of such a function being a ratio of finite Blaschke products of different degrees, with domain $\mathbb{D}$.)  We will first prove several facts about a single bounded level curve of a $f$ in isolation from the other level curves of $f$.  We will then study how the level curves of $f$ lie with respect to each other.  It is natural to expect that the sets $\{z:|f(z)|=\epsilon\}$ vary continuously as $\epsilon$ varies.  We will make this notion explicit, and use this continuity to prove several results about the bounded level curves of $f$.  It is well known that if $z_0$ is a zero or a pole of $f$, then $f$ is conformally equivalent to the function $z\mapsto{z^k}$ (for some $k\in\mathbb{Z}$) in a neighborhood of $z_0$.  We generalize this fact by finding a natural decomposition of $G$ into finitely many sub-regions (also bounded by level curves of $f$), on each of which $f$ is conformally equivalent to $z\mapsto{z^k}$ (for some $k\in\mathbb{Z}$).  Also included is a new proof, using level curves, of the Gauss--Lucas theorem that the critical points of a polynomial are contained in the convex hull of the polynomial's zeros.\medskip

{\bf Keywords:} complex analysis; meromorphic functions; level curves; Gauss--Lucas theorem
\end{abstract}

\section{Setting, Background, and Major Results}\label{Setting, Several Definitions, and Major Results}

The study of the level curves of a meromorphic function $f$ (sets of the form $\{z:|f(z)|=\epsilon\}$) or tracts of the function (sets of the form $\{z:|f(z)|<\epsilon\}$) has seemed to cluster around two major classes of questions.  We will briefly mention a couple of results from each class to give a taste of the existing research on this subject.  The first class of questions has to do with properties of a level curve (or tract) of a single function such as arc length of the level curve, area of the tract, and convexity or star-shapeness of the tract.  In 1981, W. K. Hayman and J. M. Wu~\cite{HW} showed that if $f:\mathbb{D}\to\mathbb{C}$ is an injective analytic function, then any level set of $f$ has length less than $2\cdot{10}^{35}$.  On the other hand, in 1980, P. W. Jones~\cite{J} constructed an analytic bounded function $f:\mathbb{D}\to\mathbb{C}$ each of whose level sets is either empty or of infinite length.  Questions about convexity and star-shapeness have mostly had to do with polynomial tracts, and several counter-examples have been given by G. Pirinian and others~\cite{EHP,G,P}.

The other class of questions has to do with meromorphic functions which share a level curve or a tract.  A number of people have given results showing the relationship between two meromorphic functions which share a level curve, starting with G. Valiron \cite{V} in 1937, whose work implied the following result.

\begin{theorem}
Let $f_1$ and $f_2$ be entire functions such that $|f|\equiv1\equiv|f_2|$ on a simple closed curve $\Gamma$.  Then there is an entire function $h$ with $|h|\equiv1$ on $\Gamma$, and $f_1\equiv{b_1}\circ{h}$ and $f_2\equiv{b_2}\circ{h}$ for some finite Blaschke products $b_1$ and $b_2$.
\end{theorem}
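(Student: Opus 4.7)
My plan is to pass to the unit disk via a Riemann map, recognize each $f_j$ as a finite Blaschke product in the disk coordinate, extract a common right factor, and then build the entire function $h$ by analytic continuation. Let $\Omega$ be the bounded Jordan domain with $\partial\Omega=\Gamma$, and let $\phi:\Omega\to\mathbb{D}$ be a Riemann map; by Carath\'eodory's theorem $\phi$ extends to a homeomorphism $\overline{\Omega}\to\overline{\mathbb{D}}$. The maximum modulus principle forces $f_j(\Omega)\subset\overline{\mathbb{D}}$, so $B_j:=f_j\circ\phi^{-1}$ is holomorphic on $\mathbb{D}$, continuous on $\overline{\mathbb{D}}$, and unimodular on $\partial\mathbb{D}$ -- that is, a finite Blaschke product.

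Next I would apply L\"uroth's theorem to the subfield $\mathbb{C}(B_1,B_2)\subset\mathbb{C}(\zeta)$ to obtain a single rational generator, which by a M\"obius post-composition can be normalized to a finite Blaschke product $H$. Writing $B_j=b_j\circ H$ for rational $b_j$, the identities $|B_j|=|H|=1$ on $\partial\mathbb{D}$ together with the holomorphy of each $b_j$ on $\mathbb{D}$ (which follows from the holomorphy of $B_j$ and the surjectivity of $H:\mathbb{D}\to\mathbb{D}$) force each $b_j$ to itself be a finite Blaschke product. Because $H$ generates the full field $\mathbb{C}(B_1,B_2)$, the pair $(b_1,b_2)$ is compositionally coprime, i.e.\ it has no nontrivial common right factor.

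Finally I would set $h:=H\circ\phi^{-1}$ on $\overline{\Omega}$, where $|h|=1$ on $\Gamma$ and the identities $f_j=b_j\circ h$ hold by construction, and extend $h$ to an entire function by analytic continuation. Near any boundary point $z_0\in\Gamma$, compositional coprimality of $(b_1,b_2)$ guarantees some index $j$ for which $b_j'(h(z_0))\neq 0$; a local holomorphic inverse of $b_j$ at $b_j(h(z_0))=f_j(z_0)$ then gives $h=b_j^{-1}\circ f_j$ as a holomorphic extension of $h$ through $\Gamma$ into the unbounded component of $\mathbb{C}\setminus\Gamma$, and iterating this recipe covers all of $\mathbb{C}\setminus\overline{\Omega}$.

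The main obstacle is this last continuation step: showing that the continuation is single-valued on all of $\mathbb{C}$ and develops no singularities. Compositional coprimality of $(b_1,b_2)$ should rule out simultaneous ramification of both $b_1$ and $b_2$, while the two equations $f_j=b_j\circ h$ force consistency of any two patchings obtained from $j=1$ and $j=2$; nevertheless, careful analysis is required at the finitely many points where $(f_1,f_2)$ approaches critical value pairs of $(b_1,b_2)$, and it is here that the technical burden of a rigorous proof would reside.
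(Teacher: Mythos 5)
First, a point of reference: the paper does not prove this statement at all — it is quoted as a consequence of Valiron's 1937 work — so your proposal can only be judged on its own terms, not against an in-paper argument. The portion inside $\Gamma$ is fine: with $\Omega$ the Jordan domain bounded by $\Gamma$ and $\phi:\Omega\to\mathbb{D}$ a Riemann map extended via Carath\'eodory, each $B_j=f_j\circ\phi^{-1}$ is a finite Blaschke product, and L\"uroth does give a common right factor $H$ with $\mathbb{C}(H)=\mathbb{C}(B_1,B_2)$ and with $b_1,b_2$ admitting no common right factor beyond M\"obius maps. But two steps are not established, and the second one is the actual content of the theorem. The first is your normalization of the L\"uroth generator to a finite Blaschke product by a M\"obius post-composition: this amounts to showing that the intermediate domain $H(\mathbb{D})$ is a round disk (not merely a simply connected domain onto which $H$ maps $\mathbb{D}$ properly), which is a genuine theorem about decompositions of Blaschke products, asserted here without proof or citation.

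The decisive gap is the extension of $h=H\circ\phi^{-1}$ to an entire function, which is precisely where the theorem lives, and the mechanism you offer for it is wrong: compositional coprimality of $(b_1,b_2)$ does not rule out simultaneous ramification. For instance $b_1(z)=z^2$ and $b_2(z)=z^3$ have no nontrivial common right factor, yet both are ramified at $0$ and at $\infty$; the same phenomenon occurs for genuinely Blaschke pairs (e.g.\ $z^2$ together with a degree-three Blaschke product having a critical point at $0$). On $\Gamma$ itself your local inverses exist only for the trivial reason that a finite Blaschke product has no critical points on $\partial\mathbb{D}$, so coprimality buys nothing there either. Beyond this local obstruction, you never address single-valuedness of the continuation on the non--simply-connected region $\mathbb{C}\setminus\overline{\Omega}$ (monodromy of the branch of $b_j^{-1}$ around $\Omega$), nor why the continuation cannot develop poles — note that $b_j(\infty)$ can be a finite point of modulus greater than one, so the entirety of $f_j=b_j\circ h$ does not by itself forbid poles of $h$. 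You flag this continuation step as ``the technical burden,'' but it is not a deferrable technicality: it is the theorem, and as written the proposal establishes only the easy local statement inside $\Gamma$.
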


A nice summation of this result and following results of the same flavor may be found in a 1983 paper by K. Stephenson and C. Sundberg~\cite{StSu} in which they give the following stronger result for inner functions.

\begin{theorem}
Let $\phi_1$ and $\phi_2$ be inner functions such that the sets $\{z:|\phi_1(z)|=c\}$ and $\{z:|\phi_2(z)|=c\}$ have a sub-arc in common for some $c\in(0,1)$.  Then $\phi_1=\lambda\phi_2$ for some unimodular constant $\lambda$.
\end{theorem}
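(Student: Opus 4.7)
The plan is to study the meromorphic ratio $\psi := \phi_1/\phi_2$ on $\mathbb{D}$ and show that it must reduce to a unimodular constant.

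First I would localize. By passing to a smaller sub-arc of $\gamma$ if necessary, I may assume that $\gamma$ avoids the zeros and critical points of both $\phi_1$ and $\phi_2$; then $\psi$ is holomorphic and non-vanishing in a neighborhood of $\gamma$, and $|\psi|\equiv 1$ on $\gamma$ because $|\phi_1|=|\phi_2|=c$ there. Moreover, since $\phi_1$ and $\phi_2$ are inner, the non-tangential boundary values of $\psi$ have modulus $1$ almost everywhere on $\partial\mathbb{D}$.

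Next I would exploit Schwarz reflection across $\gamma$. The arc $\gamma$ is a regular analytic curve (being a regular level curve of the holomorphic function $\phi_1$), so there is a uniquely determined antiholomorphic involution $R$ fixing $\gamma$ pointwise in a neighborhood. The hypothesis $|\phi_i|\equiv c$ on $\gamma$ for \emph{both} $i=1,2$ yields the functional identity $\phi_i(R(z))\cdot\overline{\phi_i(z)}=c^2$ in a neighborhood of $\gamma$ for each $i$ with the \emph{same} reflection $R$. This ``double reflection'' property is the geometric residue of the hypothesis that $\gamma$ is a \emph{common} level arc of the two moduli, and it is strictly stronger than the condition $|\psi|\equiv 1$ on $\gamma$ alone.

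Third I would extract constancy. The idea is to combine the double-reflection identity at $\gamma$ with the (partial) Schwarz reflection $\phi_i(1/\bar z)=1/\overline{\phi_i(z)}$ available for inner functions on $\partial\mathbb{D}$ in a non-tangential boundary-value sense. Iterating these two reflections produces a single-valued extension of $\psi$ whose symmetries and boundedness properties should force it to be constant; once $\psi\equiv\lambda$ on any open set the identity theorem extends the conclusion to all of $\mathbb{D}$, and $|\lambda|=1$ follows from $|\psi|\equiv 1$ on $\gamma$.

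The hardest step will be the last one. A general meromorphic quotient of two inner functions can certainly possess interior level curves at modulus one — for instance, the rational quotient of two distinct single Blaschke factors has such curves — so the conclusion cannot follow from the interior condition on $\psi$ alone. What must be pushed through is that the common level arc gives compatible reflections of $\phi_1$ and $\phi_2$ (not merely of their quotient) and that this rigidity, propagated to $\partial\mathbb{D}$ via the inner-function hypothesis, is incompatible with $\psi$ being non-constant. The main technical difficulty is managing this continuation across $\partial\mathbb{D}$, where the boundary regularity of a general inner function is extremely poor; some form of boundary-uniqueness theorem of Nevanlinna type, rather than naive Schwarz reflection, will be needed to make the argument rigorous.
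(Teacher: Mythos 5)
This theorem is quoted in the paper only as motivating background --- it is the 1983 result of Stephenson and Sundberg --- and the paper gives no proof of it, so there is nothing internal to compare your sketch against; I will evaluate it on its own terms.

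Your preliminary observations are sound: localizing to a regular sub-arc of $\gamma$, noting $|\psi|\equiv 1$ there and a.e.\ on $\partial\mathbb{D}$, and deriving the reflection identity $\phi_i(R(z))\,\overline{\phi_i(z)}=c^2$ near $\gamma$, with the \emph{same} involution $R$, for both $i=1,2$. You are also right that the Blaschke-quotient example shows $|\psi|\equiv 1$ on an interior arc alone cannot suffice. The concrete gap is in how you propose to exploit the extra information. Dividing the two reflection identities gives $\psi(R(z))\,\overline{\psi(z)}=1$, which is \emph{exactly} the Schwarz reflection identity one obtains from $|\psi|\equiv 1$ on $\gamma$ alone; the claimed ``strictly stronger'' rigidity therefore evaporates the moment you pass to the quotient $\psi=\phi_1/\phi_2$. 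To actually use the common-level-arc hypothesis you would have to keep $\phi_1$ and $\phi_2$ as separate objects throughout --- comparing their individual tract structures or covering data near $\gamma$ rather than just their ratio --- and your outline provides no mechanism for doing so. Your third step (``Iterating these two reflections produces a single-valued extension of $\psi$ whose symmetries and boundedness properties should force it to be constant'') is the entire content of the theorem and is left unresolved, as you yourself acknowledge with the remarks that ``the hardest step will be the last one'' and that ``some form of boundary-uniqueness theorem of Nevanlinna type \dots will be needed.'' As written, this is a plausible research plan with its only nontrivial step still open, not a proof.
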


Since then K. Stephenson~\cite{St} has given a more general result which implies much of the earlier work.

In future work we intend to prove a result somewhat in the vein of the results from this second class of questions.  Namely we will show that if the configuration of the critical level curves of two finite Blaschke products $f$ and $g$ (that is, the level curves of $f$ and $g$ which contain critical points of the respective functions) are the same (modulo homeomorphism), then $f$ and $g$ are conformally equivalent (that is, there is some conformal $\phi:\mathbb{D}\to\mathbb{D}$ such that $g\equiv{f}\circ\phi$ on $\mathbb{D}$).  Some of the motivation behind this paper is to develop the preliminaries necessary for this future work, however the results found here are of independent interest as well.  The proofs found in this paper are elementary, using well known results about meromorphic functions such as the Maximum Modulus Theorem, the Open Mapping Theorem, and the Argument Principle.  These and the other facts needed may be found in many texts on complex analysis including~\cite{Alf} and~\cite{C1}.

In order to introduce the work in this paper, we make the following standard definition.

\begin{definition}
For $D\subset\mathbb{C}$ an open set, and $f:D\to\bar{\mathbb{C}}$ a meromorphic function, and $\epsilon\in[0,\infty]$, the set $E_{f,\epsilon}\colonequals\{z\in{D}:|f(z)|=\epsilon\}$ is called a level set of $f$, and a component of $E_{f,\epsilon}$ is called a level curve of $f$.
\end{definition}

We now establish the setting used in Section~\ref{Level Curves as Planar Graphs} in order to describe the results found therein.  Let $G\subset\mathbb{C}$ be an open set.  Let $f$ be a function meromorphic on an open set containing $cl(G)$.  Let $\epsilon\in(0,\infty)$ be given, and let $\Lambda\subset{G}$ be a fixed bounded component of $E_{f,\epsilon}$ which is contained in $G$.  In Section~\ref{Level Curves as Planar Graphs}, we examine the single level curve $\Lambda$.  We observe that $\Lambda$ forms a planar graph in $\mathbb{C}$, whose vertices are critical points of $f$, and we determine exactly which planar graphs we might see as bounded level curves of $f$.  As an application of this, we give a new proof using level curves of the Gauss-Lucas theorem that the critical points of a polynomial are contained in the convex hull of the polynomial's zeros.  We then give a formula for counting the faces of $\Lambda$ in terms of the multiplicities of the critical points of $f$ in $\Lambda$.

In Section~\ref{Assorted Properties of Level Curves} we impose the following further restrictions of $G$ and $f$.

\begin{itemize}
	\item
	$|f|\equiv1$ on any unbounded component of $\partial{G}$.  For each bounded component $K$ of $\partial{G}$, there is some $\rho\in(0,\infty)$ such that $|f|\equiv\rho$ on $K$.  (That is, each component of $\partial{G}$ is contained in some level curve of $f$ in $G'$.)

	\item
	If $G$ is unbounded, then $\displaystyle\lim_{z\in{G},z\to\infty}|f(z)|=1$.
	
	\item
	$G$ does not contain any unbounded level curve of $f$.

	\item
	$G$ is connected and $G^c$ has finitely many bounded components.

	\item
	No level curve of $f$ in $G$ intersects $\partial{G}$.
\end{itemize}

It seems reasonable to expect that as the "height" $\epsilon$ of $f$ varies continuously, the corresponding level curves of $f$ will in some sense vary continuously as well.  Proposition~\ref{Level Curves Vary Continuously} makes this notion of continuous variance explicit.  Theorem~\ref{Two Level Curve Implies Crit. Level Curve.} shows that if there are any two level curves $L_1$ and $L_2$ of $f$ in $G$, each of which is exterior to the other, then there is a critical point of $f$ in $G$ which is exterior to both $L_1$ and $L_2$.  This implies that if we remove from $G$ each level curve of $f$ which contains a critical point, and each zero and pole of $f$ in $G$, then each component of the remaining set is conformally equivalent to an annulus.  In order to explain the final major result of Section~\ref{Assorted Properties of Level Curves}, we first recall a property of meromorphic functions.

It is well know that if $g$ is a function meromorphic on some region $D\subset\mathbb{C}$, and $z\in{D}$ is a zero or pole of $g$, then there is some region $D_z\subset{D}$ containing $z$, and $\phi$ a conformal mapping which maps $D_z$ to a disk centered at $0$ such that $f\equiv\phi^n$ on $D_z$ for some $n\in\mathbb{Z}\setminus\{0\}$.  Our final major result in Section~\ref{Assorted Properties of Level Curves} extends this fact by in a sense decomposing $G$ into finitely many regions on each of which $f$ is conformally equivalent to a pure power of $z$.  That is, if $A$ is one of the regions remaining after removing the level curves of $f$ in $G$ which contain critical points of $f$, along with the zeros and poles of $f$ in $G$, as noted above $A$ is conformally equivalent to an annulus.  In Theorem~\ref{f Conformally Equiv. to a Power of z} we find explicitly which annulus $A$ is conformally equivalent to, and construct a conformal mapping $\phi$ from $A$ to this annulus, such that for some $n\in\mathbb{Z}\setminus\{0\}$, $f\equiv\phi^n$ on $A$.

\section{Level Curves as Planar Graphs}\label{Level Curves as Planar Graphs}

\begin{definition}
By a planar graph we mean a finite graph embedded in a plane or sphere, whose edges do not cross.  For the purposes of this paper we will include simple closed paths as planar graphs.
\end{definition}

In the existing literature on the subject of level curves of a meromorphic function, it is generally assumed without proof that $\Lambda$ is a planar graph whose vertices are the critical points of $f$ in $\Lambda$, and whose edges are paths in $\Lambda$ with endpoints at the critical points of $f$.  However because part of the subject of this paper is precisely the geometry of $\Lambda$, we make that fact explicit in Proposition~\ref{Lambda is a planar graph.}, and give an indication as to the nature of the proof.  As much of the proof amounts to an exercise in analytic continuation and compactness, we leave out most of the details.

\begin{proposition}\label{Lambda is a planar graph.}
$\Lambda$ is a planar graph.
\end{proposition}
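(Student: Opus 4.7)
The plan is to analyze the local structure of $\Lambda$ at each of its points and then invoke compactness and analytic continuation to stitch the local descriptions into a finite graph.

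I would begin by noting that $cl(\Lambda)$ is a compact subset of the open set on which $f$ is meromorphic, and classify points of $\Lambda$ as regular (where $f'\neq 0$; no point of $\Lambda$ is a pole since $\epsilon<\infty$) or critical (where $f'=0$). At a regular point $z_0$, $f$ is locally conformal, so $\Lambda$ is locally the preimage under $f$ of the smooth arc $\{|w|=\epsilon\}$ through $f(z_0)$, hence a single smooth arc through $z_0$. At a critical point where $f'$ vanishes to order $k$, a local analytic change of coordinate puts $f$ in the normal form $f(z)-f(z_0)=(z-z_0)^{k+1}$; a direct calculation then shows that near $z_0$ the level set $\{|f|=\epsilon\}$ is exactly $2(k+1)$ smooth arcs emanating from $z_0$ with equally spaced tangents.

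Next I would establish finiteness. Because $f$ is nonconstant (otherwise $E_{f,\epsilon}$ is either empty or all of $G$), the zeros of $f'$ are isolated, so compactness of $cl(\Lambda)$ leaves only finitely many critical points in $\Lambda$; these are the vertices. The regular arcs making up $\Lambda$ minus its critical points are the candidate edges, and a finite subcover argument applied to the local descriptions around each point of $cl(\Lambda)$ shows that only finitely many such arcs exist.

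The main obstacle is showing that each maximal regular arc terminates correctly, either closing up into a simple loop or meeting a critical point at each end. For this I would use analytic continuation: starting at a regular point one extends the smooth arc in both directions by the implicit function theorem, and this extension cannot stop at a regular point since it could always be continued further. By boundedness of $\Lambda$, the continuation has limit points in $cl(\Lambda)$, so the extension can only terminate at a critical point (or close up). The local normal form at a critical point also rules out pathologies such as infinite spiraling, since only finitely many arcs enter a given critical point. Assembling these ingredients identifies $\Lambda$ as a finite graph whose vertices are the critical points of $f$ in $\Lambda$ and whose edges are the maximal regular arcs; planarity and non-crossing are automatic since $\Lambda$ lies in $\mathbb{C}$ and distinct edges are disjoint by construction.
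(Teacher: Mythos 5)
Your proposal is correct and follows essentially the same strategy as the paper: a local description of $\Lambda$ (a smooth arc through each regular point, finitely many arcs through each critical point), analytic continuation of the local inverse of $f$ along the $\epsilon$-circle to build maximal edges which either close up or terminate at critical points, and compactness to obtain finitely many vertices and edges. The one mild imprecision is the statement of the normal form — one cannot literally arrange $f(z)-f(z_0)=(z-z_0)^{k+1}$, only $f(z)-f(z_0)=\phi(z)^{k+1}$ for a conformal change of coordinate $\phi$ vanishing at $z_0$, but since the resulting arcs are conformal images of equally spaced rays, your conclusion of exactly $2(k+1)$ arcs (the content of the paper's Proposition~\ref{Evenly Many Edges Incident to Each Vertex in Lambda.}) is unaffected.
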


\begin{proof}
Since $\Lambda$ is bounded and $f$ is non-constant, there are only finitely many critical points of $f$ in $\Lambda$.  It will turn out that the critical points of $f$ are exactly the intersections of $\Lambda$.  Choose some $z_0\in\Lambda$ which is not a critical point of $f$.  We wish to show that either $z_0$ lies in a path in $\Lambda$ whose endpoints are critical points of $f$, or that $\Lambda$ contains no critical points of $f$, in which case $\Lambda$ consists of a simple closed path.

Since $f'(z_0)\neq0$, there is some open simply connected set $D$ which contains $z_0$ such that an inverse $g$ of $f$ may be defined on $f(D)$.  $f(z_0)$ lies in a circle of radius $\epsilon$.  Our strategy is to continue $g$ analytically around that circle as far as possible, and consider the path that this traces out back in the domain of $f$.

\begin{case}\label{g can be continued indefinitely along circle}
$g$ may be continued indefinitely around the $\epsilon$-circle in both directions.
\end{case}

By the compactness of $\Lambda$, there are only finitely many points in $\Lambda$ at which $f$ takes the value $f(z_0)$.  Therefore if $g$ is continued analytically along the $\epsilon$-circle in one direction enough times, the path traced out in the domain intersects itself.  Let $\sigma$ denote this path.  Because $f$ is invertible at each point in $\sigma$ we may conclude that $\sigma$ is simple (as f would not be injective at a crossing point of such a path) and does not contain any critical point of $f$.  Furthermore, we can use the invertibility of $f$ at each point in $\sigma$, and the connectedness of $\Lambda$, to show that the path $\sigma$ is all of $\Lambda$.  Thus $\Lambda$ is a simple closed path which does not contain any critical points of $f$.

\begin{case}
$g$ may not be continued indefinitely around the $\epsilon$-circle in at least one direction.
\end{case}

Again let $\sigma$ denote the path obtained by continuing $g$ around the $\epsilon$-circle as far as may be in the direction in which it cannot be continued indefinitely.  Clearly $\sigma$ is contained in $\Lambda$ and does not contain any critical points of $f$ (because $f$ is invertible at each point in $\sigma$).  One can use the compactness of $\Lambda$ to show that $\sigma$ approaches a unique point in $\Lambda$, and one can show that this point is a critical point of $f$ (otherwise one could continue $g$ further along the $\epsilon$-circle).  $g$ may not be continued indefinitely around the $\epsilon$ circle in the opposite direction because this would imply that $\Lambda$ does not contain a critical point of $f$ (as seen in Case~\ref{g can be continued indefinitely along circle} above).  Therefore if we allow $\sigma$ now to denote the path obtained by continuing $g$ along the $\epsilon$-circle in the opposite direction, we obtain another path from $z_0$ to a critical point of $f$.  We conclude that $z_0$ lies in a path in $\Lambda$ whose end points are critical points of $f$.  Furthermore this path is simple because $f$ is invertible at each point in this path other than the end points.

Finally we may again use the compactness of $\Lambda$ to show that $\Lambda$ contains only finitely many edges and vertices, which gives us the desired result.
\end{proof}

By inspecting the above proof we may make the following observation.

\begin{corollary}\label{Vertices are Crit. Points of f.}
The critical points of $f$ contained in $\Lambda$ are the vertices of $\Lambda$.  If $\Lambda$ does not contain any critical point of $f$, then $\Lambda$ is a simple closed path.
\end{corollary}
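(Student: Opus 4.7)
The plan is to read off the corollary directly from the internal structure of the proof of Proposition~\ref{Lambda is a planar graph.}; no new analytic input is needed, so the task is essentially one of bookkeeping the facts already established about the paths $\sigma$ traced out by continuing the local inverse $g$ of $f$ around the $\epsilon$-circle. I would prove the second statement first, and then use the same case analysis to handle the first.

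For the second statement, suppose $\Lambda$ contains no critical point of $f$. Pick any $z_0\in\Lambda$ and run the argument of Proposition~\ref{Lambda is a planar graph.}. Case~2 would produce a point on the $\epsilon$-circle at which $g$ fails to continue and would then force that limit point (which lies in $\Lambda$) to be a critical point of $f$, contradicting our assumption. So only Case~1 can occur, and there the conclusion is precisely that $\Lambda$ coincides with a simple closed path $\sigma$ which contains no critical point of $f$.

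For the first statement I would prove the two inclusions separately. If $w\in\Lambda$ is a critical point of $f$, then $w$ cannot lie on any path $\sigma$ obtained by analytically continuing a branch of $f^{-1}$ along the $\epsilon$-circle, since $f$ is invertible at every point of such a $\sigma$; since the edges of $\Lambda$ were built precisely from these paths $\sigma$, $w$ must be an endpoint of edges and hence a vertex. Conversely, if $w$ is a vertex of $\Lambda$, then starting the continuation argument at a nearby non-critical point $z_0$ we are in Case~2, and the proof of Proposition~\ref{Lambda is a planar graph.} already showed that the terminal point in $\Lambda$ approached by $\sigma$ must be a critical point of $f$; by the compactness and uniqueness statements in that argument, $w$ is such a terminal point and is therefore a critical point of $f$.

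The main (and only) obstacle is a cosmetic one: making sure that the informal "exercise in analytic continuation and compactness" left inside the proof of Proposition~\ref{Lambda is a planar graph.} is being invoked correctly at each step, in particular the facts that $f$ is invertible along each $\sigma$ and that the limit point of a $\sigma$ which cannot be continued further is necessarily a critical point. Once these are quoted, the corollary is immediate.
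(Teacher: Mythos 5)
Your proposal is correct and is essentially the paper's own argument: the paper proves this corollary simply "by inspecting the above proof," i.e., by reading off from the case analysis in Proposition~\ref{Lambda is a planar graph.} that edges are the maximal continuation paths $\sigma$ (on which $f$ is invertible, so they contain no critical points), that their endpoints -- the vertices -- are critical points, and that if no critical point lies in $\Lambda$ then only Case~1 can occur, making $\Lambda$ a simple closed path. Your bookkeeping of the two inclusions and the case analysis matches that intended reading at the same level of rigor.
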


Due to the nice behavior of rational functions near $\infty$, one can easily extend the result of the above proposition to all level curves of a rational function.  Since the proof merely requires one to pre-compose the rational function with a M\"obius function, we omit it here.

\begin{corollary}\label{Each Level Curve of Rat. Function is Planar Graph.}
If $g:\bar{\mathbb{C}}\to\bar{\mathbb{C}}$ is a rational function, every level curve of $g$ is a planar graph.
\end{corollary}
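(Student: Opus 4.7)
The plan is to reduce the statement to Proposition~\ref{Lambda is a planar graph.} via a Möbius change of variables. That proposition applies only to \emph{bounded} level curves of a function meromorphic on a neighborhood of the relevant closure, so the only real obstruction for a rational $g$ is a level curve $L$ that is unbounded in $\mathbb{C}$, equivalently one containing the point at infinity on the Riemann sphere.

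To handle this, I would first pick a point $p \in \bar{\mathbb{C}} \setminus L$; such a $p$ exists because $g$ is nonconstant, and hence $L$ is a proper closed subset of $\bar{\mathbb{C}}$. Let $M$ be a Möbius transformation with $M(p) = \infty$, and set $f := g \circ M^{-1}$. Then $f$ is again a rational function, hence meromorphic on all of $\mathbb{C}$, and by the choice of $p$ we have $\infty \notin M(L)$, so that $M(L) \subset \mathbb{C}$ is compact and in particular bounded.

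Next I would verify that $M(L)$ plays the role of $\Lambda$ in Proposition~\ref{Lambda is a planar graph.}. Since $M$ is a homeomorphism of $\bar{\mathbb{C}}$ and $|f \circ M| \equiv |g|$, connected components of the $\epsilon$-level set of $g$ in $\bar{\mathbb{C}}$ correspond bijectively under $M$ to connected components of the $\epsilon$-level set of $f$ in $\bar{\mathbb{C}}$; and because $\infty$ is not in the level set of $f$, removing it does not alter the component structure. Taking $G$ to be any open disk in $\mathbb{C}$ large enough to contain $M(L)$, the hypotheses of Proposition~\ref{Lambda is a planar graph.} are met (with $\Lambda = M(L)$), and we conclude that $M(L)$ is a planar graph.

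Finally, $M^{-1}$ is a homeomorphism of the sphere, so it transports the planar graph structure on $M(L) \subset \mathbb{C}$ back to a planar graph structure on $L$ embedded in $\bar{\mathbb{C}}$. I expect the only real subtlety to be the bookkeeping around the point at infinity — in particular, checking that the component structure of the level set is unaffected by deleting $\infty$ and that the Möbius image lands in a set where the earlier proposition literally applies — since all of the analytic substance has already been carried out in Proposition~\ref{Lambda is a planar graph.}.
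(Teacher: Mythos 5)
Your argument is correct and is exactly the route the paper takes: the paper omits the proof, remarking only that it suffices to pre-compose the rational function with a M\"obius transformation and invoke Proposition~\ref{Lambda is a planar graph.}, which is precisely what you carry out. One small repair: choosing $p\notin L$ only guarantees $\infty\notin M(L)$, not that $\infty$ avoids the full level set of $f$ (that would require $p\notin E_{g,\epsilon}$, which is just as easy to arrange since the level set is a proper closed subset of $\bar{\mathbb{C}}$); either make that stronger choice of $p$, or note that deleting $\infty$ cannot disturb the component $M(L)$ because that component does not contain $\infty$.
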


We will now uncover some restrictions on which planar graphs we may see as level curves of our function $f$.  First in Proposition~\ref{Each Edge of Lambda Adjacent to Bounded Face.} we see that $\Lambda$ must have bounded faces, and that each edge of $\Lambda$ is adjacent to at least one of its bounded faces.  In Proposition~\ref{Evenly Many Edges Incident to Each Vertex in Lambda.}, we see that each vertex of $\Lambda$ must have evenly many edges of $\Lambda$ incident to it.  First some topology.

\begin{lemma}\label{Closed Sets Topological Lemma}
For any disjoint closed sets $X,Y\subset\bar{\mathbb{C}}$, and $x,y\in\bar{\mathbb{C}}\setminus(X\cup{Y})$, if $x$ and $y$ are in the same component of $X^c$ and the same component of $Y^c$, then $x$ and $y$ are in the same component of $(X\cup{Y})^c$.
\end{lemma}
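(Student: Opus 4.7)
The plan is to argue by contradiction. Suppose $x$ and $y$ lie in distinct components $W_x$ and $W_y$ of $(X\cup Y)^c$. Let $U$ denote the common component of $X^c$ containing both $x$ and $y$, and $V$ the corresponding component of $Y^c$. Since $\bar{\mathbb{C}}$ is locally (path-)connected, $U$, $V$, $W_x$, $W_y$ are all open and path-connected. Because $W_x$ is connected and contained in $X^c\cap Y^c$, it sits inside $U\cap V$; in particular, since $y\in U\setminus W_x$, $W_x$ is a proper open subset of the connected set $U$, so its boundary $\partial W_x$ must meet $U$. Since $\partial W_x\subseteq X\cup Y$ (boundaries of components of open sets lie in the complement) and $U\cap X=\emptyset$, this forces $\partial W_x\cap U\subseteq Y$, so $\partial W_x$ meets $Y$. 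By the symmetric argument, $\partial W_x$ also meets $X$.

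Next I would split on whether $\partial W_x$ is connected. If it is, then as a connected subset of the disjoint closed union $X\sqcup Y$ it lies wholly in one of them; say $\partial W_x\subseteq Y$. Then $\overline{W_x}=W_x\cup\partial W_x\subseteq W_x\cup Y$, so $W_x$ is closed in $Y^c$. Being also open and connected, $W_x$ is a component of $Y^c$, and since $x\in W_x$ this forces $W_x=V$; but then $y\in V=W_x$ contradicts $y\notin W_x$. The case $\partial W_x\subseteq X$ symmetrically yields $W_x=U\ni y$, again a contradiction.

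The main obstacle is the remaining case, in which $\partial W_x$ is disconnected, splitting nontrivially as $(\partial W_x\cap X)\sqcup(\partial W_x\cap Y)$. Here I would try to construct a path from $x$ to $y$ in $(X\cup Y)^c$ directly, contradicting $W_x\neq W_y$. By path-connectivity, fix paths $\alpha:[0,1]\to U$ and $\beta:[0,1]\to V$ from $x$ to $y$; then $\alpha$ avoids $X$ but may cross $Y$, while $\beta$ avoids $Y$ but may cross $X$. The disjointness and compactness of $X,Y$ in $\bar{\mathbb{C}}$ give $d(X,Y)>0$, so any sufficiently small neighborhood of $Y$ is disjoint from $X$. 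The strategy is to splice $\alpha$ and $\beta$: at each subarc of $\alpha$ lying in $Y$, replace it by a detour built from a portion of $\beta$ that is chosen in a small neighborhood of that subarc (hence automatically in $X^c$), so that the resulting spliced path avoids $X\cup Y$. Making this splicing rigorous---in particular, showing that the endpoints of each detour can be joined within $U\cap V$---is the main technical difficulty; alternatively, the conclusion follows from Janiszewski's theorem of planar topology applied to the disjoint closed sets $X$ and $Y$ in the sphere.
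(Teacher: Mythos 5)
Your Case 1 (connected $\partial W_x$) is correct and mirrors the paper's endgame, but your Case 2 is a genuine gap: you acknowledge the splicing construction is unfinished, and the fallback of invoking Janiszewski's theorem is circular, since the lemma \emph{is} Janiszewski's theorem (for disjoint closed sets). The real content of the proof is precisely making Case 2 go away.

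The paper does this by never confronting a possibly disconnected boundary. Rather than working with the component $W_x$ itself, it normalizes $y=\infty$ and then \emph{fills in} $W_x$: letting $A_1=W_x$ and $Z$ be the union of the bounded components of $A_1^c$, set $A_2\colonequals A_1\cup Z$. Then $A_2^c$ is the single unbounded component of $A_1^c$ (which contains $B_1\ni\infty$), so $A_2$ is open, connected, with connected complement, i.e.\ simply connected in $\bar{\mathbb{C}}$. By the unicoherence of the sphere (a standard fact, and weaker than Janiszewski), the boundary of such a set is connected. Since $\partial A_2\subseteq\partial A_1\subseteq X\cup Y$ with $X$, $Y$ disjoint and compact, connectedness forces $\partial A_2\subseteq X$ or $\partial A_2\subseteq Y$, and then your Case 1 argument applied to $A_2$ in place of $W_x$ gives the contradiction (if $\partial A_2\subseteq X$, then $A_2$ is open and closed in $X^c$, hence a full component of $X^c$, but it contains $x$ and not $\infty=y$). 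So: keep your Case 1, but replace the doomed Case 2 by first passing from $W_x$ to its fill-in $A_2$, after which Case 2 simply cannot occur.

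Two smaller points. First, your path-splicing idea is not wrong in spirit, but making it rigorous would essentially re-derive Janiszewski's separation criterion, which is considerably harder than the fill-in trick. Second, be careful with ``boundaries of components of open sets lie in the complement'': this is true ($\partial W_x\subseteq\partial\bigl((X\cup Y)^c\bigr)\subseteq X\cup Y$), but you should also note explicitly that $W_x\subseteq U\cap V$, since $W_x$ is a connected subset of $X^c\cap Y^c$ containing $x$; you state this but it deserves a word of justification as it underlies both the claim $\partial W_x\cap U\neq\emptyset$ and the Case 1 contradiction.
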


\begin{proof}
Given $X,Y,x,y$ as in the statement of the lemma, suppose by way of contradiction that $x$ and $y$ are in different components of $(X\cup{Y})^c$.  Assume without loss of generality that $y=\infty$.  Let $A_1$ denote the component of $(X\cup{Y})^c$ which contains $x$, and let $B_1$ denote the component of $(X\cup{Y})^c$ which contains $y$.  Let $Z$ denote the union of all bounded components of ${A_1}^c$.  Define $A_2\colonequals{A_1\cup{Z}}$.  Since $B_1$ is open and contains $\infty$, and $A_2$ does not intersect $B_1$, we may conclude that $A_2$ is bounded.  Therefore $A_2$ has only a single unbounded component, so $A_2$ is simply connected.  And the boundary of a simply connected set is connected, so $\partial{A_2}\subset{X\cup{Y}}$ is connected.  Because $X$ and $Y$ are disjoint and compact, this implies that $\partial{A_2}$ is either contained in $X$ or contained in $Y$.  However this is a contradiction because $x$ and $y$ are in the same component of $X^c$ and in the same component of $Y^c$.
\end{proof}

It may easily be seen that this lemma implies that if $A$ is an open connected set, and $X\subset{A}$ is compact, such that $X^c$ has a single component, then $A\setminus{X}$ is connected.  This fact will be used several times in this paper, and we will cite the above lemma when it is needed.  We now proceed to the proposition.

\begin{proposition}\label{Each Edge of Lambda Adjacent to Bounded Face.}
$\Lambda$ has one or more bounded faces and a single unbounded face, and each edge of $\Lambda$ is adjacent to two distinct faces of $\Lambda$.
\end{proposition}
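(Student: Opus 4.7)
The plan is to prove three things: $\Lambda$ has a single unbounded face, at least one bounded face, and each edge is adjacent to two distinct faces. For the first, since $\Lambda$ is bounded it sits in some disk $B(0,R)$, and $\mathbb{C}\setminus\overline{B(0,R)}$ is a connected subset of $\mathbb{C}\setminus\Lambda$, so all unbounded components of $\mathbb{C}\setminus\Lambda$ coalesce.

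The essential input for the remaining two claims is a local calculation at each vertex (critical point) $v$ of $\Lambda$. Writing $f$ near $v$ as $f(z)=w_0+u^m+\cdots$ in conformal coordinates, with $|w_0|=\epsilon$ and $m\geq 2$, and solving $|w_0+u^m|^2=\epsilon^2$ in polar form $u=re^{i\theta}$ yields $r^{m}=-2\epsilon\cos(m\theta-\arg w_0)$, which describes $m$ smooth arcs emerging from $v$ in $2m$ distinct tangent directions. Hence every vertex of $\Lambda$ has even degree $2m\geq 4$. The existence of a bounded face is then immediate: $\Lambda$ is a finite connected graph with every vertex of degree at least $2$, so it contains a cycle $C$, and the Jordan region bounded by $C$ must contain at least one bounded component of $\mathbb{C}\setminus\Lambda$.

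For the third claim, fix an edge $e$ with endpoints $v_1,v_2$ and a non-critical interior point $z_0\in e$. In a small disk $D$ about $z_0$, local injectivity of $f$ gives $D\setminus\Lambda=D^+\sqcup D^-$ with $|f|>\epsilon$ on $D^+$ and $|f|<\epsilon$ on $D^-$. If the faces $U^+$ and $U^-$ containing $D^\pm$ were equal, I would join a path in $U^+$ from $D^+$ to $D^-$ with a short transverse chord through $D$ to form a Jordan curve $J$ with $J\cap\Lambda=\{z_0\}$. Transversality at $z_0$ would put $v_1$ and $v_2$ on opposite sides of $J$, and since $\Lambda$ is connected and meets $J$ only at $z_0$, this would force $e$ to be a bridge of $\Lambda$, yielding $\Lambda-e=\Lambda_1\sqcup\Lambda_2$ with $v_i\in\Lambda_i$. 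But then $v_1$ would have odd degree in $\Lambda_1$ while every other vertex of $\Lambda_1$ retains its even $\Lambda$-degree, making $v_1$ the unique odd-degree vertex of $\Lambda_1$ and contradicting the handshake lemma.

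The main obstacle in this plan is the local $m$-arc computation at a critical point; once that is in hand, the graph-theoretic consequences (a finite connected graph of minimum degree $\geq 2$ has a cycle; a graph with all even degrees has no bridges, by the handshake lemma) are routine.
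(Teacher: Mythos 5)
Your proof is correct in its main thrust but takes a genuinely different route from the paper's. The paper never looks at the vertices at all: it picks $z_0$ on an edge, invokes the Open Mapping Theorem to find nearby $x,y$ with $|f(x)|<\epsilon<|f(y)|$, and then uses Lemma~\ref{Closed Sets Topological Lemma} (with $X=\Lambda$, $Y=E\cup G^c$ where $E$ is the rest of $E_{f,\epsilon}$) to show that if $x,y$ were in the same face of $\Lambda$ they would be joined by a path in $G\setminus E_{f,\epsilon}$, contradicting the intermediate value theorem applied to $|f|$. That single argument gives both the two-face conclusion and, as a byproduct, the existence of a bounded face. Your approach instead front-loads the local model at a critical point (which the paper defers to Proposition~\ref{Evenly Many Edges Incident to Each Vertex in Lambda.}) and then leans on graph theory: even degrees preclude bridges via the handshake lemma, and minimum degree $\geq 2$ yields a cycle and hence a bounded region. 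The local computation $r^m=-2\epsilon\cos(m\theta-\arg w_0)$ is right, and the bridge-via-Jordan-curve argument is sound for edges with distinct endpoints. Your route gives a more structural, combinatorial picture; the paper's route is more economical and needs nothing about what $\Lambda$ looks like at its vertices.

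Two gaps to close. First, an edge of $\Lambda$ may be a loop (both endpoints the same critical point, as in a figure-eight level curve); then $v_1=v_2$, removing $e$ lowers the degree by $2$ (still even), and the handshake argument says nothing. You need to treat this case separately — e.g., a loop is itself a Jordan curve, so it already separates the two local sides of $z_0$ into distinct faces; equivalently, your Jordan curve $J$ cannot meet the closed curve $e$ transversally in exactly one point, by parity. Second, the degenerate case where $\Lambda$ has no vertices (no critical points, so $\Lambda$ is a simple closed curve by Corollary~\ref{Vertices are Crit. Points of f.}) is not reached by an argument that names $v_1,v_2$; it is immediate from the Jordan curve theorem, but should be stated. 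Both are easy patches; the paper's proof sidesteps them entirely because it never distinguishes vertices from interior edge points.
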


\begin{proof}
Since $\Lambda$ is bounded, there is a single unbounded face of $\Lambda$, so to show that $\Lambda$ has bounded faces, it suffices to show the second result of the proposition, that each edge of $\Lambda$ is adjacent to two distinct faces of $\Lambda$.  Define $E\colonequals\{z\in{E_{f,\epsilon}\cap{G}}:z\notin\Lambda\}$.  Then $cl(E)$ is a closed set contained in $cl(G)$, and $cl(E)$ does not intersect $\Lambda$.

Choose some $z_0\in\Lambda$ which is not a zero of $f'$.  That is, $z_0$ is a point in one of the edges of $\Lambda$.  Choose some $\delta>0$ small enough so that $B_{\delta}(z_0)\subset{G\setminus{E}}$.  (So for all $z\in{B_{\delta}(z_0)}$, if $|f(z)|=\epsilon$, then $z\in\Lambda$.)  By the Open Mapping Theorem, there are points $x,y\in{B_{\delta}(z_0)}$ such that $|f(x)|<\epsilon$ and $|f(y)|>\epsilon$.  Suppose by way of contradiction that $x$ and $y$ are in the same face of $\Lambda$.  Lemma~\ref{Closed Sets Topological Lemma} (with $\Lambda=X$ and $E\cup{G^c}=Y$) gives that $x$ and $y$ are in the same component of $(\Lambda\cup{E}\cup{G^c})^c$ which is equal to $G\setminus{E_{f,\epsilon}}$.  Since open connected sets in $\mathbb{C}$ are path connected, there is a path $\gamma:[0,1]\to{G\setminus{E_{f,\epsilon}}}$ from $x$ to $y$.  But $f$ is continuous, and $|f(x)|<\epsilon<|f(y)|$, so there is some $s\in(0,1)$ such that $|f(\gamma(s))|=\epsilon$, and thus $\gamma(s)\in{E_{f,\epsilon}}$, and we have our contradiction.  So we conclude that $x$ and $y$ are in different faces of $\Lambda$, and thus $\Lambda$ has bounded faces.

Furthermore, since $\delta$ was arbitrarily small, and the points $x$ and $y$ we found in $B_{\delta}(z_0)$ were shown to be in different components of $\Lambda^c$, we have that $z_0$ is in the boundary of more than one component of $\Lambda^c$.  Since the edge of $\Lambda$ containing $z_0$ is locally conformally equivalent to the $\epsilon$-circle, we have that the edge of $\Lambda$ containing $z_0$ is in the boundary of at most two faces of $\Lambda$, and we are done.
\end{proof}

One more restriction may be made on which planar graphs we may see as level curves of $f$.  As the method of proof is largely the same as that used in the proof of Proposition~\ref{Lambda is a planar graph.}, we will leave out most of the details.  First a definition.

\begin{definition}
If $z\in{G}$ is a zero of $f'$, then let $mult(z)$ denote the multiplicity of $z$ as a zero of $f'$.
\end{definition}

\begin{proposition}\label{Evenly Many Edges Incident to Each Vertex in Lambda.}
If $z$ is a vertex of $\Lambda$, then the number of edges of $\Lambda$ which are incident to $z$ is exactly $2(mult(z)+1)$.
\end{proposition}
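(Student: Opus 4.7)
The plan is to work in a local chart near the critical point. Let $z_0$ be a vertex of $\Lambda$ and set $k\colonequals mult(z_0)$. The standard local normal form for meromorphic functions (the same fact invoked just before Theorem~\ref{f Conformally Equiv. to a Power of z}) produces a neighborhood $D_{z_0}$ of $z_0$ and a conformal $g$ on $D_{z_0}$ with $g(z_0)=0$ such that $f(z)=f(z_0)+(g(z))^{k+1}$ on $D_{z_0}$. Set $a\colonequals f(z_0)$, so $|a|=\epsilon$. Then under $\zeta=g(z)$ the trace of $\Lambda$ inside $D_{z_0}$ is identified with the portion near $\zeta=0$ of the curve $C_a\colonequals\{\zeta:|\zeta^{k+1}+a|=|a|\}$, and edges of $\Lambda$ incident to $z_0$ correspond to branches of $C_a$ emanating from $\zeta=0$.

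Following the strategy of Proposition~\ref{Lambda is a planar graph.}, I would next track how the $\epsilon$-circle in the image lifts under $f$. The circle passes through $a$ and, near $a$, splits into two small arcs running in opposite directions along the circle. Because $f$ restricted to $D_{z_0}$ is $(k+1)$-to-$1$ with unique preimage $z_0$ of $a$, each of these two arcs admits exactly $k+1$ analytic lifts starting at $z_0$, one for each choice of $(k+1)$-th root of the local image displacement. Exactly as in the proof of Proposition~\ref{Lambda is a planar graph.}, each lift can then be continued analytically along the $\epsilon$-circle until it reaches a critical point of $f$, producing an edge of $\Lambda$ whose end at $z_0$ is one of the branches in question. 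This gives $2(k+1)$ local arcs meeting at $z_0$.

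To verify that these $2(k+1)$ lifts give $2(k+1)$ distinct edge-ends (with a loop counted twice, once per end) and that no additional branch of $\Lambda$ approaches $z_0$, I would describe $C_a$ directly in polar coordinates $\zeta=re^{i\phi}$. Expanding $|\zeta^{k+1}+a|^2=|a|^2$ and dividing by $r^{k+1}$ reduces the defining equation to
\[
r^{k+1}=-2|a|\cos\bigl((k+1)\phi-\arg a\bigr),
\]
so as $r\to 0^+$ the admissible angles $\phi$ must approach a zero of the right-hand side, i.e.\ $(k+1)\phi-\arg a$ must tend to an odd multiple of $\pi/2$. There are exactly $2(k+1)$ such $\phi$ in $[0,2\pi)$, and near each of them the implicit function theorem yields a single smooth branch of $C_a$ tangent to that direction at $0$. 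Pulling back through $g$, this gives exactly $2(k+1)$ distinct edge-ends at $z_0$.

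The main obstacle is precisely this bookkeeping: producing $2(k+1)$ lifts via the analytic-continuation half of the argument is straightforward, but showing that they have pairwise distinct tangent directions at $z_0$ and that no further branch of $\Lambda$ enters $D_{z_0}$ both rely on the explicit local model $\zeta\mapsto\zeta^{k+1}$ and the polar calculation above. Given the author's remark that the method is largely the same as in Proposition~\ref{Lambda is a planar graph.}, I would present the analytic-continuation count in full and only sketch the polar-coordinate verification.
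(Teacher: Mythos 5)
Your proposal is correct and rests on exactly the local fact the paper's brief proof invokes: near a vertex $z_0$ the function behaves like $\zeta\mapsto f(z_0)+\zeta^{k+1}$ with $k=mult(z_0)$ (equivalently, $f$ is $(k+1)$-to-$1$ there), so the level set has precisely $2(k+1)$ edge-ends at $z_0$. The paper leaves this count as ``one can show'' via analytic continuation of inverse branches as in Proposition~\ref{Lambda is a planar graph.}, and your normal-form/polar-coordinate computation is a sound, somewhat more explicit way of carrying out that verification, with your edge-end (loop-counted-twice) convention matching the handshake count used later in Proposition~\ref{Count of Faces of Lambda.}.
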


\begin{proof}
As already noted, if $z\in\Lambda$ is not a zero of $f'$, then $z$ is not a vertex of $\Lambda$.  Suppose $z\in\Lambda$ is a zero of $f'$.  Then $f$ is precisely $(mult(z)+1)\text{-to-}1$ in a neighborhood of $z$.  One can show (using the same tricks found in the proof of Proposition~\ref{Lambda is a planar graph.} of analytic continuation of branches of the inverse of $f$) that there are precisely $2(mult(z)+1)$ edges of $\Lambda$ incident to $z$.
\end{proof}

In future work we will show that for any planar graph which satisfies the restrictions described in the above propositions (namely a planar graph $\Lambda$ such that each edge is in the boundary of two distinct faces of $\Lambda^c$ and each vertex of $\Lambda$ has an evenly many, and four or more, edges incident to it), there is some some region $G$ and function $f$ as above such that some level curve of $f$ in $G$ is homeomorphic to the graph $\Lambda$.  In Figure~\ref{fig:AdmissibleGraphs} we have several examples of planar graphs that satisfy the above restrictions, and in Figure~\ref{fig:NonAdmissibleGraphs} we have several examples of planar graphs which do not satisfy these restrictions.

\begin{figure}[H]
\begin{minipage}[b]{0.45\linewidth}
\centering
	\includegraphics[width=\textwidth]{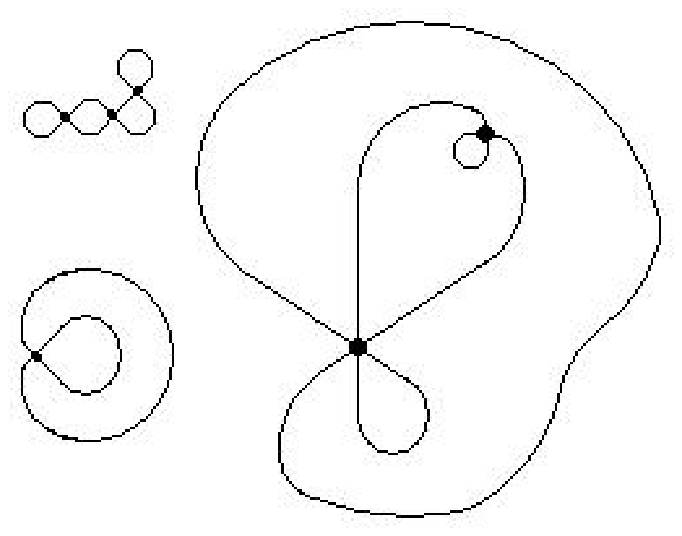}
	\caption{Admissible Graphs}
	\label{fig:AdmissibleGraphs}
\end{minipage}
\hspace{0.5cm}
\begin{minipage}[b]{0.45\linewidth}
\centering
	\includegraphics[width=\textwidth]{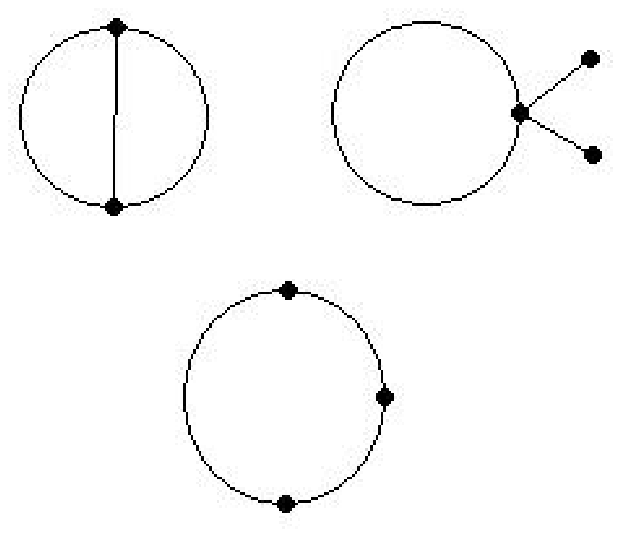}
	\caption{Non Admissible Graphs}
	\label{fig:NonAdmissibleGraphs}
\end{minipage}
\end{figure}

\begin{definition}
For $K\subset{G}$, define $B_K\colonequals\{z\in{K}:f'(z)=0,f(z)\neq0\}$.
\end{definition}

We are now able to count precisely the number of faces of $\Lambda$ in terms of the critical points of $f$ contained in $\Lambda$ as follows.

\begin{proposition}\label{Count of Faces of Lambda.}
$\Lambda$ has exactly $\left(\displaystyle\sum_{a\in{B_{\Lambda}}}mult(a)\right)+1$ bounded faces and one unbounded face.
\end{proposition}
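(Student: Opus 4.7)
The plan is a direct application of Euler's formula to the planar graph $\Lambda$. Since $\Lambda$ is a connected component of $E_{f,\epsilon}$ and $\epsilon > 0$, no point of $\Lambda$ is a zero of $f$; hence by Corollary~\ref{Vertices are Crit. Points of f.} the vertex set of $\Lambda$ is exactly $B_\Lambda$. Letting $V$, $E$, $F$ denote the numbers of vertices, edges, and faces of $\Lambda$ respectively, we have $V = |B_\Lambda|$. By Proposition~\ref{Evenly Many Edges Incident to Each Vertex in Lambda.} each vertex $a \in B_\Lambda$ has degree $2(mult(a) + 1)$, so the handshake lemma gives $E = |B_\Lambda| + \sum_{a \in B_\Lambda} mult(a)$.

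Applying Euler's formula $V - E + F = 2$, valid for any connected planar graph with at least one vertex (with $F$ counting every complementary face including the unbounded one), then yields $F = 2 + \sum_{a \in B_\Lambda} mult(a)$. Proposition~\ref{Each Edge of Lambda Adjacent to Bounded Face.} guarantees a single unbounded face, so subtracting one gives the claimed count of bounded faces. Connectedness of $\Lambda$ is built into the setup (it is a single component of $E_{f,\epsilon}$), so Euler applies without qualification.

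The only genuine bookkeeping point is the degenerate case $B_\Lambda = \emptyset$, in which Euler's formula as stated does not directly apply to a vertexless embedded graph. Here Corollary~\ref{Vertices are Crit. Points of f.} asserts that $\Lambda$ is a simple closed path, so the Jordan curve theorem supplies exactly one bounded face, in agreement with the formula (the empty sum being zero). Beyond this minor case split I expect no real obstacle: the content of the proposition is essentially a repackaging of the structural facts already established in Propositions~\ref{Lambda is a planar graph.}, \ref{Each Edge of Lambda Adjacent to Bounded Face.}, and \ref{Evenly Many Edges Incident to Each Vertex in Lambda.} through Euler's formula.
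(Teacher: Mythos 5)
Your proposal is correct and follows essentially the same route as the paper: identify the vertices with $B_{\Lambda}$ via Corollary~\ref{Vertices are Crit. Points of f.}, count edges with the handshake lemma using Proposition~\ref{Evenly Many Edges Incident to Each Vertex in Lambda.}, apply Euler's formula, and treat the case $B_{\Lambda}=\emptyset$ separately as a simple closed curve. The paper's proof is the same computation, so there is nothing to add.
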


\begin{proof}
Note first that if $\Lambda$ does not contain any zeros of $f'$ (so $B_{\Lambda}$ is empty), then from Corollary~\ref{Vertices are Crit. Points of f.}, $\Lambda$ is a single simple closed path, and thus $\Lambda$ has two faces, one bounded and one unbounded, and this agrees with the formula in the statement above.

Now suppose that $B_{\Lambda}$ is non-empty.  Let $\mathcal{V}$ denote the number of points in $B_{\Lambda}$ (therefore $\mathcal{V}$ is the number of vertices of $\Lambda$).  For $a\in{B_{\Lambda}}$, $f'$ has a zero with multiplicity $mult(a)$ at $a$, so by Proposition~\ref{Evenly Many Edges Incident to Each Vertex in Lambda.}, exactly $2(mult(a)+1)$ edges of $\Lambda$ meet at $a$.  Each line in $\Lambda$ has two endpoints, so this implies that there are $\dfrac{1}{2}\displaystyle\sum_{a\in{B_{\Lambda}}}2(mult(a)+1)=\left(\displaystyle\sum_{a\in B_{\Lambda}}mult(a)\right)+\mathcal{V}$ edges in $\Lambda$.

Euler's characteristic formula (see for example~\cite{Ale}) for a connected planar graph states that for any planar graph,

\[
\text{number of faces}=\text{number of edges}-\text{number of vertices}+2.
\]

Using this, we obtain that the number of faces of $\Lambda$ is exactly

\[ \left(\left(\displaystyle\sum_{a\in{B_{\Lambda}}}mult(a)\right)+\mathcal{V}\right)-\mathcal{V}+2=\left(\displaystyle\sum_{a\in{B_{\Lambda}}}mult(a)\right)+2.
\]

Of these, one is unbounded and the rest are bounded.
\end{proof}

Here are several examples.

\begin{example}
Let $f(z)=z^n$ for some $n\in\{1,2,3,\ldots\}$, and $\epsilon\in(0,\infty)$.  Then $E_{f,\epsilon}$ is the circle $\{z\in\mathbb{C}:|z|=\epsilon^{\frac{1}{n}}\}$.
\end{example}

\begin{example}
Let $f(z)=z^n-1$ for some $n\in\{2,3,\ldots\}$.  If $\epsilon\in(0,1)$, then $E_{f,\epsilon}$ has $n$ components, each a simple closed path which contains a single zero of $f$ in its bounded face.  If $\epsilon\in(1,\infty)$, then $E_{f,\epsilon}$ consists of a single simple closed path which contains all $n$ zeros of $f$ in its bounded face.  Finally, $E_{f,1}$ consists of a single component with a single vertex (at $0$), $n$ edges, and $n$ bounded faces, each of which contains a single zero of $f$.  In Figure~\ref{fig:p(z)=z^5-1levelcurves} below we see the example $f(z)=z^5-1$.  The level sets shown are the sets $E_{f,.5}$, $E_{f,1}$, and $E_{f,1.5}$.
\end{example}

\begin{figure}[H]
	\centering
		\includegraphics[width=.5\textwidth]{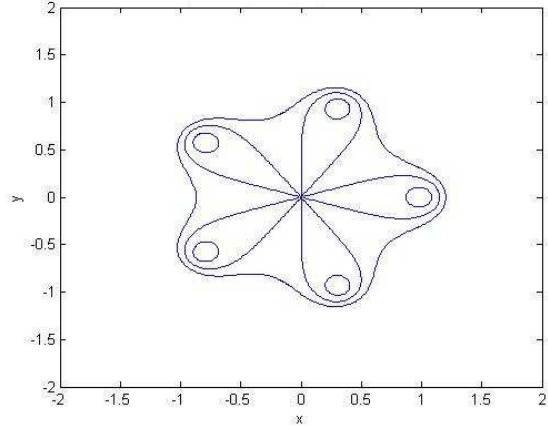}
	\caption{$f(z)=z^5-1$}
	\label{fig:p(z)=z^5-1levelcurves}
\end{figure}

As an application of the above results, we have a new proof of the following theorem of Gauss.  First a definition.

\begin{definition}
For $z\in{G}$, define $\Lambda_z$ to be the component of $E_{f,|f(z)|}$ which contains $z$.  If $z\in\partial{G}$, let $\Lambda_z$ denote the component of $\partial{G}$ which contains $z$.
\end{definition}

\begin{theorem}\label{Gauss' Theorem}
Let $p$ be a polynomial.  The critical points of $p$ are contained in the convex hull of the zeros of $p$.
\end{theorem}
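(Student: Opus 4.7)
The plan is a proof by contradiction that combines the planar graph structure of $\Lambda_c$ established in Section~\ref{Level Curves as Planar Graphs} with the maximum and minimum modulus principles. If $p(c) = 0$, then $c$ is a zero of $p$ and the conclusion is trivial, so I would assume $p(c) \ne 0$ and consider $\Lambda_c$. The first step is to establish as a key lemma that every bounded face $F$ of $\Lambda_c$ contains at least one zero of $p$: by Proposition~\ref{Each Edge of Lambda Adjacent to Bounded Face.} we have $\partial F \subset \Lambda_c$, so $|p| = |p(c)|$ on $\partial F$, whence by the maximum modulus principle (with $p$ non-constant) $|p| < |p(c)|$ strictly in $F$; if $F$ contained no zero of $p$ then $1/p$ would be holomorphic on a neighbourhood of $\overline{F}$, and applying the maximum modulus principle to $1/p$ would yield $|p| \ge |p(c)|$ in $F$, a contradiction.

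Next, supposing for contradiction that $c$ lies outside the convex hull $H$ of the zeros of $p$, I would fix a line $\ell$ strictly separating $c$ from $H$. By Proposition~\ref{Evenly Many Edges Incident to Each Vertex in Lambda.}, $c$ is a vertex of $\Lambda_c$ with $2(mult(c)+1) \ge 4$ incident edges, dividing a neighbourhood of $c$ into $2(mult(c)+1)$ alternating ``up'' ($|p| > |p(c)|$) and ``down'' ($|p| < |p(c)|$) sectors; the $mult(c)+1 \ge 2$ down-sectors are equally spaced in angle around $c$, and each lies locally in a bounded face of $\Lambda_c$, which by the lemma contains a zero of $p$ belonging to $H$.

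The main obstacle is converting this configuration into an actual contradiction: since for $mult(c)=1$ the two down-sector directions at $c$ are antipodal, one of them points away from $H$, and the corresponding bounded face must then extend across $\ell$ in order to contain its zero, so the local sector picture does not by itself rule out $c \notin H$. My approach to closing the gap would be to apply the Phragm\'{e}n--Lindel\"{o}f principle to $1/p$ on the open half-plane on $c$'s side of $\ell$ (where $p$ is zero-free and $1/p \to 0$ at infinity) to obtain the strict bound $|p(c)| > \min_{\ell} |p|$, and then to combine this analytic estimate with the requirement that each of the $mult(c)+1$ bounded faces adjacent to $c$ extends across $\ell$ while being enclosed by the single connected curve $\Lambda_c$, deriving the sought contradiction from the resulting planar graph constraint.
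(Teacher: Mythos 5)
Your setup matches the paper's: you correctly use the planar graph structure of $\Lambda_c$ from Section~\ref{Level Curves as Planar Graphs}, and your key lemma (every bounded face of $\Lambda_c$ contains a zero of $p$, via the Maximum Modulus Theorem applied to $1/p$) is exactly the observation the paper relies on. You also correctly note that $c$, being a vertex, has at least two bounded faces incident to it, each of which must therefore reach across the separating line $\ell$ to enclose a zero. Up to this point you are reproducing the paper's opening moves.

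The problem is that you stop precisely where the real work begins, and the replacement you sketch does not close the gap. The Phragm\'{e}n--Lindel\"{o}f estimate $|p(c)| > \min_{\ell} |p|$ is true but carries no new information: you already know that each bounded face incident to $c$ crosses $\ell$ and that $|p| < |p(c)|$ strictly inside such a face, so points on $\ell$ with $|p| < |p(c)|$ are guaranteed by your own key lemma. The phrase ``deriving the sought contradiction from the resulting planar graph constraint'' is a placeholder, not an argument, and it is not at all clear what contradiction the estimate plus ``$\Lambda_c$ is connected'' would produce. The paper's actual finish is a concrete geometric computation that you are missing: after normalizing so the zeros lie in $\mathbb{D}$ and $c\in(1,\infty)$, it takes $L=\{\operatorname{Re} z = 1\}$, notes that each bounded face incident to $c$ yields two disjoint arcs of $\Lambda_c$ from $c$ to $L$, and hence (with at least two such faces) there are at least four arcs, from which one extracts two distinct points $z_1, z_2 \in \Lambda_c$ on the same horizontal line $\{\operatorname{Re} z > 1,\ \operatorname{Im} z = s\}$ with $\operatorname{Re} z_1 < \operatorname{Re} z_2$. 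For every zero $w_i \in \mathbb{D}$ one then has the strict inequality $|z_1 - w_i| < |z_2 - w_i|$, so $|p(z_1)| < |p(z_2)|$, contradicting $z_1, z_2 \in \Lambda_c$. This monotonicity of each factor $|z - w_i|$ along horizontal segments to the right of the disk is the elementary but essential ingredient your proposal lacks; without something of this kind (or an equivalent, e.g.\ the classical $\sum_i 1/(c - w_i) = 0$ argument, which would however abandon the level-curve method), the proof is not complete.
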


\begin{proof}
Let $w_1,w_2,\ldots,w_n\in\mathbb{C}$ be the zeros of $p$ repeated according to multiplicity.  Assume by way of contradiction that there is some critical point $z_0$ of $p$ which is not in the convex hull of $\{w_i:1\leq{i}\leq{n}\}$.  By pre-composing with a linear map, we may assume that all zeros of $p$ are contained in the disk $\mathbb{D}=\{z\in\mathbb{C}:|z|<1\}$, and that $z_0\in(1,\infty)$.  Let $G$ denote one of the bounded faces of $\Lambda_{z_0}$ which is incident to $z_0$, and let $\gamma:[0,1]\to\mathbb{C}$ be a parameterization of the boundary of $G$.  Thus, $\gamma$ is a simple closed path with $\gamma(0)=\gamma(1)=z_0$.  The Maximum Modulus Theorem implies that $G$ contains a zero of $p$, and therefore $\partial{G}$ intersects the line $L\colonequals\{z\in\mathbb{C}:Re(z)=1\}$.  Define $r_1,r_2\in(0,1)$ by $r_1=\min(r\in[0,1]:\gamma(r)\in{L})$ and $r_2=\max(r\in[0,1]:\gamma(r)\in{L})$.

Then $\gamma(0,r_1)$ and $\gamma(r_2,1)$ are paths from $z_0$ to $L$ which do not intersect except at $z_0$ (and possibly in $L$).  Therefore since there are at least two bounded faces of $\Lambda_{z_0}$ which are incident to $z_0$, there are at least four paths in $\Lambda_{z_0}$ from $z_0$ to $L$ which do not intersect except at $z_0$ and in $L$.  It is not hard to see then that there is some $s\in(-1,1)\setminus\{0\}$ such that two of the paths intersect the set $\{z\in\mathbb{C}:Re(z)>1,Im(z)=s\}$.

\begin{figure}[H]
	\centering
		\includegraphics[width=.5\textwidth]{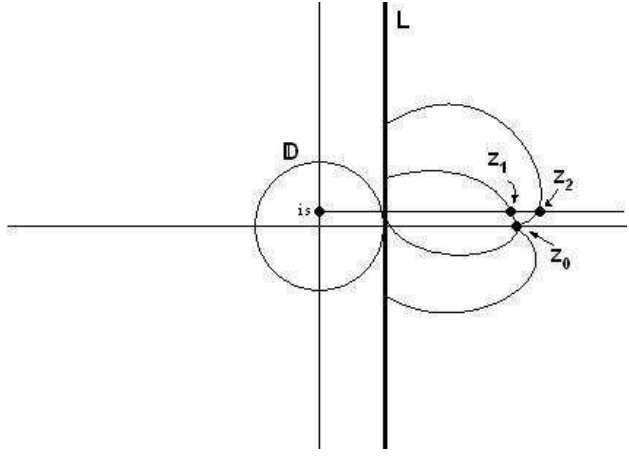}
	\caption{Gauss' Theorem}
	\label{fig:FigureForGaussTheorem}
\end{figure}

Let $z_1,z_2$ be distinct points in $\{z\in\mathbb{C}:Re(z)>1,Im(z)=s\}$ which are contained in $\Lambda_{z_0}$, with $Re(z_1)<Re(z_2)$, as in Figure~\ref{fig:FigureForGaussTheorem}.  Then for each zero $w_i$ of $p$, $|z_1-w_i|<|z_2-w_i|$.  Therefore

\[
|p(z_1)|=\displaystyle\prod_{i=1}^n|z_1-w_i|<\prod_{i=1}^n|z_2-w_i|=|p(z_2)|,
\]

which is a contradiction because $z_1$ and $z_2$ are in the same level curve of $p$.  Thus we conclude that there is no critical point $z_0$ of $p$ outside of the convex hull of the zeros of $p$.
\end{proof}

\section{Assorted Properties of Level Curves}\label{Assorted Properties of Level Curves}

\subsection{Setting}

We now impose the following additional restrictions on the set $G$ and the function $f$.

\begin{itemize}
	\item
	$|f|\equiv1$ on any unbounded component of $\partial{G}$.  For each bounded component $K$ of $\partial{G}$, there is some $\rho\in(0,\infty)$ such that $|f|\equiv\rho$ on $K$.  (That is, each component of $\partial{G}$ is contained in some level curve of $f$ in $G'$.)
	\begin{note}
If one of the components of $G^c$ is a single point $\{z\}$ for some $z\in\mathbb{C}$ (and thus $z$ is a removable discontinuity of $f$ restricted to $G$), then we replace $G$ with $G\cup\{z\}$.
	\end{note}

	\item
	If $G$ is unbounded, then $\displaystyle\lim_{z\in{G},z\to\infty}|f(z)|=1$.
	
	\item
	$G$ does not contain any unbounded level curve of $f$.

	\item
	$G$ is connected and $G^c$ has finitely many bounded components.

	\item
	No level curve of $f$ in $G$ intersects $\partial{G}$.  That is, for any $z\in{G}$, $\text{cl}(\lambda_z)\subset{G}$.
\end{itemize}

\begin{note}
Of particular interest to us for future work is the case $f=\frac{B_1}{B_2}$ where $B_1$ and $B_2$ are finite Blaschke products with $deg(B_1)\neq{deg(B_2)}$, with $G=\mathbb{D}$.  Note that when $B_1$ and $B_2$ are finite degree Blaschke products of the same degree, $f=\frac{B_1}{B_2}$ and $G=\mathbb{D}$ violates the final restriction above.  For example, if we set $B_1(z)=\dfrac{z-\frac{1}{2}}{1-\frac{1}{2}z}$ and $B_2(z)=\dfrac{z+\frac{1}{2}}{1+\frac{1}{2}z}$, and $f=\frac{B_1}{B_2}$ and $G=\mathbb{D}$, this choice of $f$ and $G$ satisfy the first four restrictions above, but the level curve of $f$ which contains $0$ consists of the imaginary axis along with the unit circle, and thus the final restriction is violated.
\end{note}

\subsection{Properties}

In the previous section we examined one level curve of $f$ in isolation from the others.  In this section we catalogue some facts about the level curves of $f$ globally in $G$.

The restriction $\displaystyle\lim_{z\in{G},z\to\infty}|f(z)|=1$ immediately implies that there are only finitely many zeros and poles of $f$ contained in $G$.  Our first result in this section extends this to the level curves of $f$ associated to values between $0$ and $\infty$.  We also show that there are only finitely many level curves of $f$ in $G$ which contain critical points of $f$.  We will first introduce some vocabulary and define a partial ordering on the set of all level curves of $f$ in $G$ which we will use in the proof.

\begin{definition}
A level curve of $f$ which is not a zero or pole of $f$, and which contains a critical point of $f$, we will call a critical level curve of $f$.  A level curve of $f$ which is not a critical level curve we call a non-critical level curve of $f$.
\end{definition}

\begin{definition}
If $\Lambda_1$ and $\Lambda_2$ are level curves of $f$ in $G$ then we say $\Lambda_1\prec\Lambda_2$ if $\Lambda_1$ lies in one of the bounded faces of $\Lambda_2$.  Let $D$ be an open sub-set of $G$.  If $\Lambda_1$ is a critical level curve of $f$ contained in $D$, then we say that $\Lambda_1$ is $\prec$-maximal with respect to $G$ if there is no other critical level curve $\Lambda_2$ of $f$ contained in $D$ such that $\Lambda_1\prec\Lambda_2$.
\end{definition}

\begin{proposition}\label{G has Only Finitely Many Crit. Level Curves.}
$G$ contains only finitely many components of $E_{f,\epsilon}$, and only finitely many critical level curves.
\end{proposition}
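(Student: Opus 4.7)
The plan is to count the level curves at a fixed height $\epsilon$ by assigning each one a ``marker'' drawn from a finite set, and to handle the critical level curves analogously via critical points. Let $\mathcal{M}$ denote the union of the zeros of $f$ in $G$, the poles of $f$ in $G$, and the bounded components of $G^c$. I first show $\mathcal{M}$ is finite: bounded components of $G^c$ are finite by hypothesis, and for zeros and poles, bounded $G$ makes finiteness immediate from meromorphicity on the compact set $\text{cl}(G)$, while unbounded $G$ uses $\lim_{z\in G,z\to\infty}|f(z)|=1$ to confine the zeros and poles to a compact subset of $G$ outside of which $|f|$ is bounded away from $0$ and $\infty$.

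The key geometric observation is that every bounded face $F$ of every level curve $\Lambda$ of $f$ in $G$ (each such $\Lambda$ possessing at least one bounded face by Proposition~\ref{Each Edge of Lambda Adjacent to Bounded Face.}) contains at least one element of $\mathcal{M}$. Indeed, if $F$ contained no zero or pole of $f$ in $G$ and no bounded component of $G^c$, then $F\subset G$ and $f$ would be zero- and pole-free on $\text{cl}(F)$; since $|f|\equiv\epsilon$ on $\partial F\subset\Lambda$, applying the maximum modulus principle to $f$ and the minimum modulus principle to $1/f$ on $\text{cl}(F)$ would then force $|f|\equiv\epsilon$ throughout $F$, making $f$ constant on the connected set $G$ --- a contradiction.

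Now fix $m\in\mathcal{M}$ and let $\mathcal{L}_m$ denote the family of level curves at height $\epsilon$ whose bounded faces contain $m$. For any two distinct $\Lambda,\Lambda'\in\mathcal{L}_m$, the disjointness of $\Lambda$ and $\Lambda'$ combined with the fact that $m$ lies in a bounded face of each forces, by standard planar topology, that either $\Lambda\prec\Lambda'$ or $\Lambda'\prec\Lambda$; hence $\mathcal{L}_m$ is totally ordered by $\prec$. If $\mathcal{L}_m$ were infinite, I would extract an ascending chain $\Lambda_1\prec\Lambda_2\prec\cdots$, let $F_i$ be the bounded face of $\Lambda_i$ containing $m$, and form the annular regions $A_i\colonequals F_{i+1}\setminus\text{cl}(F_i)$. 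Each $A_i$ is a nonempty open set with $\partial A_i\subset\Lambda_i\cup\Lambda_{i+1}$, so $|f|\equiv\epsilon$ on $\partial A_i$; the same max/min modulus reasoning then yields an element of $\mathcal{M}$ in $A_i$. Since the $A_i$ are pairwise disjoint (for $i<j$ one has $A_i\subset F_{i+1}\subset\text{cl}(F_j)$ while $A_j\cap\text{cl}(F_j)=\emptyset$), this gives infinitely many distinct markers, contradicting $|\mathcal{M}|<\infty$. Hence each $\mathcal{L}_m$ is finite, and summing over $m\in\mathcal{M}$ bounds the number of level curves at height $\epsilon$.

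For the second assertion, I apply the same confinement argument to the zeros of $f'$ in $G$ to get finitely many critical points of $f$; since each critical level curve contains at least one critical point while each critical point lies in a unique level curve, the number of critical level curves is at most the number of critical points and hence finite. The main obstacle I anticipate is the annular chain step: verifying that each $A_i$ is a genuine nonempty open annular region with $\partial A_i\subset\Lambda_i\cup\Lambda_{i+1}$ (which relies on the simple-connectedness of the faces $F_i$, a consequence of the connectedness of $\Lambda_i$ as a planar graph) and ensuring the max/min modulus argument survives the possibility that $A_i$ contains a bounded component of $G^c$ (in which case the element of $\mathcal{M}$ found in $A_i$ is such a component rather than a zero or pole of $f$).
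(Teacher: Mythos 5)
Your argument for the level curves at a fixed height $\epsilon$ is sound and takes a genuinely different route from the paper: you pigeonhole the level curves by which marker (zero, pole, or bounded component of $G^c$) sits in a bounded face, observe each pigeonhole class is totally ordered by $\prec$, and then run the annular-region/Maximum Modulus argument on a single chain. The paper instead assumes an infinite family outright and extracts a subsequence that is ascending, descending, or pairwise incomparable, handling each case separately; the incomparable case is dispatched directly since each curve bounds a disjoint region containing a marker. Your version makes the ``why'' more transparent (the markers are a finite resource and each chain spends them), while the paper's subsequence extraction is shorter to state and sidesteps having to verify that each $\mathcal{L}_m$ is a chain. Both ultimately rest on the same annular Maximum/Minimum Modulus step, and I believe your version can be made rigorous (the nesting $\mathrm{cl}(F_i)\subset F_{i+1}$, $\Lambda_i\subset F_{i+1}$, and the nonemptiness of $A_i$ all check out with a little planar-topology care, as you anticipate).

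The gap is in the second assertion, specifically the sentence ``I apply the same confinement argument to the zeros of $f'$ in $G$ to get finitely many critical points.'' The confinement argument used $\lim_{z\in G,z\to\infty}|f(z)|=1$ to bound $|f|$ away from $0$ and $\infty$ outside a compact set, which kills zeros and poles of $f$ out there --- but it gives no control over $f'$. When $G$ is unbounded, nothing in the hypotheses directly forbids $f'$ from vanishing at a sequence of points tending to $\infty$ while $|f|$ stays close to $1$. (For bounded $G$ your claim is fine, since $f'$ is meromorphic on a neighborhood of the compact set $\mathrm{cl}(G)$.) The paper never claims finiteness of the zeros of $f'$ a priori; it instead runs the same chain argument directly on a hypothetical infinite sequence of critical level curves, using that each such curve, regardless of its height, still has a bounded face containing a zero, a pole, or a component of $G^c$. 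Your own marker-and-chain machinery from the first part would also close this gap if applied to critical level curves rather than to the zeros of $f'$ (the curves are pairwise disjoint since they lie at distinct heights, and the total-ordering and annular arguments go through unchanged), and only afterwards would one deduce that there are finitely many zeros of $f'$ in $G$, as the paper does in the remark preceding Corollary~\ref{Existence of Prec Maximal Element in G}.
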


\begin{proof}
Suppose by way of contradiction that $\{L_i\}_{i=1}^{\infty}$ is a sequence of distinct components of $E_{f,\epsilon}$ contained in $G$.  Then there is some subsequence $\{L_{i_n}\}_{n=1}^{\infty}$ such that one of the following occurs:

\begin{itemize}
	\item
	$L_{i_1}\prec{L_{i_2}}\prec{L_{i_3}}\prec\cdots$.

	\item
	$L_{i_1}\succ{L_{i_2}}\succ{L_{i_3}}\succ\cdots$.

	\item
	For each $p,q\in\{1,2,\ldots\}$ with $p\neq{q}$, $L_{i_p}$ is contained in the unbounded face of ${L_{i_q}}$.
\end{itemize}

\begin{case}
$L_{i_1}\prec{L_{i_2}}\prec{L_{i_3}}\prec\cdots$.
\end{case}

For each $j\in\{1,2,\ldots,\}$, let $D_j$ denote the face of $L_{i_{j+1}}$ which contains $L_{i_j}$.  Let $\widetilde{D_j}$ denote the subset of $D_j$ which is exterior to $L_{i_j}$.  With this definition, we have a sequence of bounded disjoint open connected sets $\left\{\widetilde{D_j}\right\}_{j=1}^\infty$, such that $\partial{\widetilde{D_j}}\subset{G}$ for each $j$.  (One can use Lemma~\ref{Closed Sets Topological Lemma} to show that each $\widetilde{D_j}$ is connected.)  Moreover, since there are only finitely many bounded components of $G^c$, all but finitely many of the $\left\{\widetilde{D_j}\right\}_{j=1}^\infty$ are contained in $G$.

Thus (by dropping finitely many of the original sequence) we may assume that each of $\left\{\widetilde{D_j}\right\}_{j=1}^{\infty}$ is contained entirely in $G$.  However since $f$ is meromorphic on $G$, and $|f|\equiv\epsilon$ on $\partial{\widetilde{D_j}}$ for each $j\in\{1,2,\ldots\}$, the Maximum Modulus Theorem implies that $f$ has a zero or pole in each $\widetilde{D_j}$.  Thus we have a contradiction because as noted earlier, $f$ has only finitely many zeros and poles in $G$.

\begin{case}
$L_{i_1}\succ{L_{i_2}}\succ{L_{i_3}}\cdots$.
\end{case}

The exact same argument works as in the previous case with the appropriate changes.

\begin{case}
For each $p,q\in\{1,2,\ldots\}$ with $p\neq{q}$, $L_{i_p}$ is contained in the unbounded face of ${L_{i_q}}$.
\end{case}

For each $j\in\{1,2,\ldots\}$, let $D_j$ denote one of the bounded components of ${L_{i_j}}^c$.  Again, by dropping finitely many of the original $\{D_j\}_{j=1}^{\infty}$, we may assume that each of $\{D_j\}_{j=1}^{\infty}$ is contained in $G$, and thus must contain a zero or pole of $f$, which gives us the desired contradiction.

Thus by cases we have a contradiction of the original assumption, so we conclude that $G$ contains only finitely many level curves of $E_{f,\epsilon}$.

The fact that $G$ contains only finitely many critical level curves follows from an almost identical argument, letting $\{L_i\}_{i=1}^{\infty}$ be a sequence of critical level curves, and again obtaining a contradiction by using the fact that there are only finitely many zeros and poles of $f$ in $G$ and only finitely many components of $G^c$.
\end{proof}

In the next result we show that a closed set contained in the complement of a level curve of $f$ may be separated from that level curve by a different non-critical level curve of $f$.  This begins to show that the level sets of $f$ vary continuously.  First a topological fact that we will use several times throughout this paper, the proof of which follows directly from the compactness of $\bar{\mathbb{C}}$.

\begin{fact}\label{Closed Set Can Be Connected.}
Let $D\subset\bar{\mathbb{C}}$ be connected and open (either in $\mathbb{C}$ or in $\bar{\mathbb{C}}$).  Let $K\subset{D}$ be closed (in $\mathbb{C}$ or in $\bar{\mathbb{C}}$).  Then there is a closed connected set $\widetilde{K}\subset{D}$ which contains $K$.  Furthermore if $K$ is bounded, then $\widetilde{K}$ may be chosen to be bounded.
\end{fact}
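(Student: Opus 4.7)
The plan is to use the compactness of $\bar{\mathbb{C}}$ to reduce the statement to a finite covering argument, and to exploit the path-connectedness of $D$ (which holds because $D$ is an open connected subset of a locally path-connected space) to glue the resulting pieces together by arcs inside $D$.

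The main case is when $K$ is closed in $\bar{\mathbb{C}}$, so that $K$ is compact. First I would use the openness of $D$ to choose, for each $z\in K$, an open connected neighborhood $U_z$ of $z$ with $cl(U_z)\subset D$. By compactness of $K$ finitely many such $U_1,\dots,U_n$ suffice, and after discarding those that miss $K$ we may assume each meets $K$. Pick $z_i\in U_i\cap K$, and then, for $i\geq 2$, a path $\gamma_i\subset D$ from $z_1$ to $z_i$. I would then set
\[
\widetilde{K}\colonequals K\cup\bigcup_{i=1}^{n}cl(U_i)\cup\bigcup_{i=2}^{n}\gamma_i([0,1]).
\]
This is a finite union of compact connected sets, each meeting the piece containing $z_1$, so $\widetilde{K}$ is compact, connected, contained in $D$, and contains $K$.

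For the bounded refinement, assume $K\subset\mathbb{C}$ is bounded, so that $\infty\notin K$. Each $U_i$ can then be taken to be an ordinary open disk in $\mathbb{C}$, and if $\infty\in D$ I would further require the paths $\gamma_i$ to avoid $\infty$; this is possible because $D\setminus\{\infty\}$ is open and still path-connected (removing a single point from an open connected subset of the $2$-sphere preserves path-connectedness). The resulting $\widetilde{K}$ is then a compact subset of $\mathbb{C}$, and hence bounded.

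The one subtlety I anticipate is the case where $K$ is closed in $\mathbb{C}$ but unbounded. Here I would exhaust $K$ by the compacta $K_m\colonequals K\cap cl(B_m(0))$, inductively apply the compact case to $\widetilde{K_{m-1}}\cup K_m$ to produce a nested sequence $\widetilde{K_1}\subset\widetilde{K_2}\subset\cdots\subset D$, and set $\widetilde{K}\colonequals\bigcup_m\widetilde{K_m}$. The main obstacle is checking that this union is closed in $D$: one has to control at each stage how far the new pieces extend, so that any point of $D$ which is a limit of the union already lies in some $\widetilde{K_m}$. All of this is elementary and quickly dispatched using the local compactness of $D$.
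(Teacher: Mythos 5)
Your covering-and-paths argument for compact $K$ is correct, and it is essentially the argument the paper has in mind: the paper offers no proof beyond the remark that the fact "follows from the compactness of $\bar{\mathbb{C}}$", and every application in the paper (e.g.\ the set $K_1$ in Proposition~\ref{Level Curves are Separated from Compact Sets by Another Level Curve}, with $\infty$ adjoined in the second case) is to a set that is compact in $\mathbb{C}$ or in $\bar{\mathbb{C}}$. Your bounded refinement is also fine: Euclidean disks together with paths in $D\setminus\{\infty\}$ (which stays open and connected since $D$ is an open subset of a surface) give a compact subset of $\mathbb{C}$.

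The gap is in your last paragraph. The closedness of $\bigcup_m\widetilde{K_m}$ is not a bookkeeping issue that local compactness dispatches; when $K$ is unbounded, closed in $\mathbb{C}$, and $\infty\notin D$, no choice of connecting sets can work, because in that generality the assertion (with $\widetilde{K}$ closed in $\mathbb{C}$) is false. For a counterexample, let $D=B_1(0)\cup\bigcup_{n\geq2}T_n$ where $T_n=\{x+iy:0<x<n+1,\ |y-\tfrac{1}{n}|<\epsilon_n\}$ and the $\epsilon_n$ are chosen so small that the tubes are pairwise disjoint; each $T_n$ meets the unit disk, so $D$ is open and connected. Let $K=\{n+\tfrac{i}{n}:n\geq2\}$, which is closed in $\mathbb{C}$ and contained in $D$. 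Suppose $S\subset D$ is connected and contains $K$. For $n$ large, put $W_n=T_n\cap\{z:\operatorname{Re}(z)>5\}$; the top, bottom, and right faces of $\partial{W_n}$ lie outside $D$, so the relative boundary of $W_n$ in $D$ is just the segment $H_n=\{5+iy:|y-\tfrac{1}{n}|<\epsilon_n\}$. Since $S$ meets $W_n$ (at $n+\tfrac{i}{n}$) and meets $D\setminus{cl(W_n)}$ (at $2+\tfrac{i}{2}$), connectedness forces $S\cap{H_n}\neq\emptyset$. Thus $S$ contains points $5+iy_n$ with $y_n\to0$, while $5\notin{D}$, so $S$ cannot be closed in $\mathbb{C}$. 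Consequently your exhaustion scheme cannot be completed, and the claim that this case is "elementary and quickly dispatched" should be withdrawn. (When $\infty\in{D}$ the unbounded case does reduce to your compact case by replacing $K$ with $K\cup\{\infty\}$, which is closed in $\bar{\mathbb{C}}$; the failure occurs precisely when $\infty\notin{D}$.) In short: your proof establishes the Fact in the cases the paper actually uses, but the remaining case you sketch is not merely unproved, it is false as literally stated, so it should be either excluded by hypothesis or reinterpreted.
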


\begin{definition}
For $D\subset\mathbb{C}$, let $sc(D)$ denote the union of $D$ with each bounded component of $D^c$.  (The "sc" stands for "simply connected.")
\end{definition}

\begin{proposition}\label{Level Curves are Separated from Compact Sets by Another Level Curve}
Let $L$ denote some level curve of $f$ in $G$, or some component of $\partial{G}$, and let $\eta\in[0,\infty]$ be the value that $|f|$ takes on $L$.  Let $K\subset{sc(G)}$ denote some closed set contained in a single bounded component of $L^c$, such that at least one of $L$ or $K$ is bounded.  If $z\in{G}$ is sufficiently close to $L$ and in the same component of $L^c$ as $K$, the following holds.

\begin{itemize}
	\item
	$|f(z)|\neq\eta$.

	\item
	$\Lambda_z$ is a non-critical level curve of $f$ in $G$.

	\item
	$K$ and $L$ are in different faces of $\Lambda_z$.
	
	\item
	If $L$ is bounded and $K$ is in the unbounded face of $L$, then $K$ is in the unbounded face of $\Lambda_z$.  Otherwise $K$ is in the bounded face of $\Lambda_z$.
\end{itemize}
\end{proposition}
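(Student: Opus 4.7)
The plan is to choose a sufficiently small neighborhood $U$ of $L$ and show that every $z \in U \cap V$ (where $V$ denotes the component of $L^c$ containing $K$) satisfies all four conclusions. To construct $U$, I would use Proposition~\ref{G has Only Finitely Many Crit. Level Curves.} to note that $E_{f,\eta}$ has only finitely many components in $G$, each with closure contained in $G$ by the fifth setting restriction. Together with the finiteness of critical values of $f$ in $G$ (a consequence of finitely many critical points) and the continuity of $|f|$ near $L$, this lets me shrink $U$ so that $U \cap E_{f,\eta} \cap G \subset L$, so that $|f|(U \setminus L) \cap G$ avoids every critical value of $f$ other than possibly $\eta$, and so that $U$ contains no zeros or poles of $f$ outside $L$. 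For any $z \in U \cap V$ with $z \notin L$, the first bullet ($|f(z)| \neq \eta$) is then immediate, and the second bullet follows from Corollary~\ref{Vertices are Crit. Points of f.} since $|f(z)|$ is not a critical value, so $\Lambda_z$ contains no critical point and must be a simple closed path. A brief connectedness argument proves $\Lambda_z \subset V$: $\Lambda_z$ is connected, contains $z \in V$, and is disjoint from $L$ because $|f| \equiv |f(z)| \neq \eta$ on $\Lambda_z$.

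The third bullet---that $K$ and $L$ lie in different faces of $\Lambda_z$---is the main obstacle. Since $\Lambda_z$ is a Jordan curve disjoint from the connected set $L$, $L$ lies entirely in one face of $\Lambda_z$, and I must show $K$ lies in the other. I would argue by contradiction: suppose some path $\gamma$ from $k \in K$ to $\ell \in L$ lies in the face of $\Lambda_z$ containing $L$; truncating $\gamma$ at its first meeting with $L$, I may assume $\gamma$ lies in $V$ except at its endpoint. The key auxiliary fact is a localization statement: $\Lambda_z$ lies inside any prescribed neighborhood of $L$ whenever $z$ is close enough to $L$. This is proved by a compactness argument, for if a sequence $z_n \in V$ whose distance to $L$ tends to $0$ admitted a sequence $w_n \in \Lambda_{z_n}$ staying a fixed positive distance from $L$, a subsequential limit $w^*$ would satisfy $|f(w^*)| = \eta$ with $w^* \notin L$, contradicting the isolation of $L$ constructed earlier. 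With this localization, for $z$ close enough to $L$, $\Lambda_z$ lies within any thin tube around $L$ and therefore must cross the path $\gamma$, producing the desired contradiction. Lemma~\ref{Closed Sets Topological Lemma} may be useful here for handling the planar topology cleanly.

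Finally, the fourth bullet reduces to a short case analysis based on the shape of $V$. When $V$ is a bounded face of $L$ (or when $L$ is unbounded, forcing $V$ to be bounded), $\Lambda_z$ is a Jordan curve inside the bounded simply connected region $V$; its bounded face is contained in $V$ and contains $K$, while $L$ sits in the unbounded face of $\Lambda_z$. When $V$ is the unbounded face of $L$ (with $L$ bounded), $\Lambda_z$ wraps around $L$, so $L$ lies in the bounded face of $\Lambda_z$ and $K$ in the unbounded face. The chief difficulty throughout is the localization of $\Lambda_z$ near $L$, which hinges on the isolation argument from the first paragraph.
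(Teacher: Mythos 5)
Your reductions of the first two bullets are fine, but the proof of your key localization lemma does not work as stated. You suppose $d(z_n,L)\to 0$, take $w_n\in\Lambda_{z_n}$ with $d(w_n,L)\geq\epsilon_0$, and claim a subsequential limit $w^*$ with $|f(w^*)|=\eta$, $w^*\notin L$ contradicts ``the isolation of $L$.'' It does not: your isolation statement only controls $E_{f,\eta}\cap G$ \emph{inside a small neighborhood of} $L$, while $w^*$ sits at distance at least $\epsilon_0$ from $L$ and may perfectly well lie on a different component of $E_{f,\eta}$ in $G$, on a component of $\partial G$ at height $\eta$ (recall $f$ is meromorphic on a neighborhood of $cl(G)$, so the limit need not lie in $G$), or the $w_n$ may escape to $\infty$ when $\eta=1$. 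A correct proof must exploit the connectedness of the whole curve $\Lambda_{z_n}$: it has to cross a ``moat'' separating a neighborhood of $L$ from everything far away, on which $\bigl||f|-\eta\bigr|$ is bounded below. That is exactly the paper's device: it builds a compact connected set $K_2$ containing everything relevant, notes $\iota\colonequals\inf_{\partial K_2}\bigl||f|-\eta\bigr|>0$, and chooses $z$ so close to $L$ that $\bigl||f(z)|-\eta\bigr|<\iota$ (using the limit of $|f|$ at $\infty$ plus compactness when $L$ is unbounded), whence $\Lambda_z$ cannot meet $\partial K_2$, hence misses $K_2$ entirely.

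There is a second gap even if localization is granted: the step ``$\Lambda_z$ lies in a thin tube around $L$ and therefore must cross $\gamma$'' is unjustified. A Jordan curve contained in a thin tube about $L$ need not meet every path from $K$ to $L$; a priori it could be a small null-homotopic loop in the tube, separating nothing. What rules this out---and what actually decides the third and fourth bullets---is the Maximum Modulus Theorem: the bounded face of the non-critical curve $\Lambda_z$ must contain a zero or pole of $f$ or a component of $G^c$, and none of these lie in the tube. The paper's proof runs entirely on this ingredient: $K_2$ is chosen connected and to contain $K$, all zeros, poles and critical points, the rest of $E_{f,\eta}$, the components of $G^c$ in the relevant region, and $\infty$ in the case where $L$ is bounded and $K$ lies in its unbounded face; since $\Lambda_z$ misses the connected set $K_2$, that set lies in a single face of $\Lambda_z$, and Maximum Modulus (or the presence of $\infty$ in $K_2$) identifies which face, giving both the separation of $K$ from $L$ and the bounded/unbounded dichotomy of the last bullet in one stroke. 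Your sketch never invokes this, and your fourth-bullet phrases ``its bounded face contains $K$'' and ``$\Lambda_z$ wraps around $L$'' are precisely the assertions still needing proof. The two approaches are genuinely different in spirit---yours is a limiting/compactness argument along the curve, the paper's is a static separation-by-a-compact-connected-set argument---but as written yours has these two holes, both of which are most naturally filled by importing the paper's $K_2$--$\iota$ construction and the Maximum Modulus step.
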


\begin{proof}
Let $D$ denote the intersection of the component of $L^c$ which contains $K$ with $sc(G)$.

\begin{case}
$L$ is unbounded.
\end{case}

Since $L$ is unbounded, $K$ must be bounded.  Let $K_1\subset{D}$ be the set of all points $z\in{D}$ such that $z$ satisfies one of the following.

\begin{itemize}
	\item
	$z\in{K}$.

	\item
	$z\in{G}$ and $f(z)=0$ or $f(z)=\infty$.

	\item
	$z\in{G}$ and $|f(z)|=\eta$.

	\item
	$z\in{G}$ and $f'(z)=0$.

	\item
	$z\in{G}^c$.
\end{itemize}

$K_1$ is closed (as it is a finite union of closed sets), and by Proposition~\ref{G has Only Finitely Many Crit. Level Curves.}, $K_1$ is bounded and thus compact.  Therefore by Fact~\ref{Closed Set Can Be Connected.}, we may find some compact connected set $\widetilde{K_1}\subset{D}$ which contains $K_1$.  We can then find a compact connected set $K_2\subset{D}$ such that $\widetilde{K_1}$ is contained in the interior of $K_2$ (for example a finite union of closed balls contained in $D$).  Since $\widetilde{K_1}$ does not intersect $\partial{K_2}$, $|f|$ does not take the value $\eta$ on $\partial{K_2}$.  Thus since $\partial{K_2}$ is compact, if we set $\iota\colonequals\displaystyle\inf_{z\in\partial{K_2}}(||f(z)|-\eta|)$, then $\iota>0$.  We will now use this $\iota$ to determine how close the "sufficiently close" from the statement of the proposition is.

\begin{claim}
Some $\delta>0$ may be found small enough so that if $z\in{G}$ is within $\delta$ of $L$, then $z\notin{K_2}$, and $||f(z)|-\eta|\in(0,\iota)$.
\end{claim}

Since each level curve of $f$ in $G$ is bounded, $L$ must be an unbounded component of $\partial{G}$, and thus $\eta=1$.  However we will keep writing "$\eta$", as we will be using essentially the same argument for the next case, when $\eta$ may not equal $1$.  Since $K_2$ is compact, and $L$ is closed, $\delta>0$ may be chosen so that if $z\in{G}$ is within $\delta$ of $L$, then $z\notin{K_2}$.  Since $\displaystyle\lim_{z\in{G},z\to\infty}|f(z)|=\eta$, there is some $R>0$ such that if $z\in{G}$ with $|z|>R$, then $||f(z)|-\eta|<\iota$.  Furthermore, Proposition~\ref{G has Only Finitely Many Crit. Level Curves.} implies that the set $\{z\in{G}:|f(z)|=\eta\}$ is bounded, so by choosing $R$ larger if necessary, we may conclude that for $z\in{G}$ with $|z|>R$, $||f(z)|-\eta|\in(0,\iota)$.   On the other hand, by the compactness of $\{z\in{L}:|z|\leq{R}\}$, we can find some $\delta>0$ smaller than before such that if $z\in{G}$ is within $\delta$ of $L$, and $|z|\leq{R}$, then $||f(z)-\eta|\in(0,\iota)$, which gives us the result of the claim.

\begin{claim}
With $\delta>0$ chosen as in the previous claim, if $z\in{D}$ is less than $\delta$ away from $L$, then $\Lambda_z$ is a non-critical level curve of $f$ contained in $G$, such that $L$ is in the unbounded face of $\Lambda_z$, and $K_2$ is in the bounded face of $\Lambda_z$, and $|f(z)|\neq\eta$.
\end{claim}

Let $z\in{D}$ be less than $\delta$ away from $L$.  By the previous claim, $||f(z)|-\eta|\in(0,\iota)$ (and thus $|f(z)|\neq\eta$), so by definition of $\iota$, $\Lambda_z$ does not intersect $\partial{K_2}$.  Since $\Lambda_z$ does intersect ${K_2}^c$ (namely at $z$), and $\Lambda_z$ is connected, we may conclude that $\Lambda_z$ does not intersect $K_2$.  And $K_2$ is connected, so $K_2$ is entirely contained in one of the faces of $\Lambda_z$.  Since $K_2$ contains all critical points of $f$ in $D$, $\Lambda_z$ is a non-critical level curve of $f$, and therefore has only one bounded face.  By the Maximum Modulus Theorem this bounded face must contain either a zero or pole of $f$, or a point in $G^c$.  But each zero and pole of $f$ and point in $G^c$ which is in $D$ is contained in $K_2$, so we may conclude that $K_2$ is contained in the bounded face of $\Lambda_z$.  Since $L$ is unbounded, $L$ is contained in the unbounded face of $\Lambda_z$.

\begin{case}
$L$ is bounded.
\end{case}

If $K$ is contained in a bounded component of $L^c$, then the same argument as above works with minor changes, so let us assume that $K$ is contained in the unbounded component of $L^c$.  We use a similar argument, but we want to construct $K_2$ in a way that will guarantee that when we have found our $\delta$, if $z\in{D}$ is within $\delta$ of $L$, then $K_2$ will be contained in the unbounded face of $\Lambda_z$.  Therefore we define $K_1$ identically as before except in addition we include $\infty$ in $K_1$.  Constructing $K_2$ as before, we obtain a closed connected set contained in $D$ which contains $K_1$ in its interior.  Let $\iota$ and $\delta$ be defined as above.  The same argument as above with minor changes allows us to conclude if $z\in{D}$ is within $\delta$ of $L$, then $\Lambda_z$ is a non-critical level curve of $f$ in $G$ with $|f|\neq\eta$ on $\Lambda_z$, and $L$ is in the bounded face of $\Lambda_z$, and $K_2$ (and therefore $K$) is in the unbounded face of $\Lambda_z$.
\end{proof}

This last proposition hints that in some sense the components of $E_{f,\eta}$ vary continuously as $\eta$ varies.  And indeed this proposition does most of the work of showing this continuous variance, as it is used extensively in the proof of Proposition~\ref{Level Curves Vary Continuously}, which makes very explicit the continuous variance.  First two corollaries.

\begin{definition}
For $K\subset{G}$, define $\Lambda_K\colonequals\displaystyle\bigcup_{z\in{K}}\Lambda_z$.
\end{definition}

\begin{corollary}\label{If K is Compact then Lambda_K is Compact.}
Let $K\subset{G}$ be compact.  Then $\Lambda_K$ is compact.
\end{corollary}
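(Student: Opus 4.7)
The plan is to show that $\Lambda_K$ is compact by establishing that it is both bounded and closed as a subset of $\mathbb{C}$, with Proposition~\ref{Level Curves are Separated from Compact Sets by Another Level Curve} doing the main work throughout. For boundedness, for each $z \in K$ I will use that proposition to produce a non-critical level curve $M_z$ of $f$ in $G$ whose bounded face contains $z$ and on which $|f| \equiv \eta_z$ for some $\eta_z \neq |f(z)|$. Concretely, one applies Proposition~\ref{Level Curves are Separated from Compact Sets by Another Level Curve} with $L$ a component of $\partial G$ or an outer level curve of $f$ enclosing $z$ at a height different from $|f(z)|$, then lets $M_z = \Lambda_{z'}$ for some $z'$ close enough to $L$ on the same side as $z$. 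By continuity of $|f|$, some neighborhood $U_z \ni z$ consists entirely of points $w$ which lie in the bounded face of $M_z$ and satisfy $|f(w)| \neq \eta_z$; then $\Lambda_w$ is connected, contains $w$, and is disjoint from $M_z$ (their level values differ), so $\Lambda_w$ is confined to the bounded face of $M_z$. Extracting a finite subcover $U_{z_1},\ldots,U_{z_m}$ of the compact $K$, we get that $\Lambda_K$ lies in the bounded union $\bigcup_i \bigl(M_{z_i} \cup \text{bounded face of } M_{z_i}\bigr)$.

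For closedness, let $w_n \in \Lambda_K$ converge to a finite point $w \in \mathbb{C}$, write $w_n \in \Lambda_{z_n}$ with $z_n \in K$, and pass to a subsequence with $z_n \to z \in K$, so $|f(w_n)| = |f(z_n)| \to |f(z)|$. I first rule out $w \in \partial G$: if $w$ lay on a boundary component $B$ with $|f| \equiv \rho$, continuity would force $|f(z)| = \rho$, and Proposition~\ref{Level Curves are Separated from Compact Sets by Another Level Curve} applied to $L = B$ with a compact set containing $\Lambda_z$ would produce a non-critical level curve $N$ separating $\Lambda_z$ from $B$ at some value $\neq \rho$. For $n$ large, $|f(z_n)| \neq |f|_N$ so $\Lambda_{z_n} \cap N = \emptyset$, yet $\Lambda_{z_n}$ is connected and contains both $z_n$ (near $z$, on the $\Lambda_z$-side of $N$) and $w_n$ (near $B$, on the $B$-side of $N$), a contradiction. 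Hence $w \in G$ and $|f(w)| = |f(z)|$, so $w \in E_{f,|f(z)|}$. An entirely analogous separating argument, now placing a non-critical level curve between $\Lambda_z$ and $w$, rules out $w$ lying in a component of $E_{f,|f(z)|}$ distinct from $\Lambda_z$, so $w \in \Lambda_z \subset \Lambda_K$.

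The main obstacle is producing, in each invocation of Proposition~\ref{Level Curves are Separated from Compact Sets by Another Level Curve} above, an appropriate outer curve $L$, since that proposition only applies when the compact set of interest lies in a bounded component of $L^c$. When $G$ is unbounded and the point of interest is not enclosed by any bounded component of $\partial G$, no boundary component is available for this role, and one must exploit $\displaystyle\lim_{z \in G,\, z \to \infty}|f(z)| = 1$ together with Proposition~\ref{G has Only Finitely Many Crit. Level Curves.} to exhibit a bounded non-critical level curve of $f$ at a height close to $1$ that can serve as $L$. Verifying this (especially the side-by-side configuration, where the competing component of $E_{f,|f(z)|}$ through $w$ sits in the unbounded face of $\Lambda_z$ and vice versa, so that an enclosing outer curve must be constructed to separate the two) is the most delicate piece of bookkeeping in the proof.
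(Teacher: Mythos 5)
Your closedness argument is essentially sound and runs parallel to the paper's (the paper separates $K$ from $\Lambda_{z_0}$ by a single non-critical level curve and uses that two level-set components are either equal or disjoint; your sequential version of this works). Even there, note that the ``side-by-side configuration'' you flag as delicate needs no extra construction: Proposition~\ref{Level Curves are Separated from Compact Sets by Another Level Curve}, by its fourth bullet and Case 2 of its proof, already covers a compact set lying in the unbounded face of a bounded $L$, so you may separate $\Lambda_z$ from $w$ directly without first enclosing anything.

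The genuine gap is in the boundedness half. Your plan requires, for each $z\in K$ not enclosed by a bounded component of $\partial G$, an ``outer curve'' $L$ with $z$ in a bounded face of $L$, and you propose to obtain it by ``exhibiting a bounded non-critical level curve of $f$ at a height close to $1$'' from the limit condition at infinity together with Proposition~\ref{G has Only Finitely Many Crit. Level Curves.}. Neither tool produces a level curve that \emph{surrounds} a prescribed point: Proposition~\ref{G has Only Finitely Many Crit. Level Curves.} only gives finiteness of the components at a fixed height, and the limit condition controls $|f|$ near infinity, not the topology of level components around $z$. Producing such an enclosing curve is essentially the statement being proved, so deferring it as ``bookkeeping'' leaves the core step missing. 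You also overlook that Proposition~\ref{Level Curves are Separated from Compact Sets by Another Level Curve} expressly allows $L$ to be an \emph{unbounded} component of $\partial G$ (Case 1 of its proof): when $\partial G$ has an unbounded component, one application with that component as $L$ and the closed set $K$ yields a single non-critical level curve with $K$ in its bounded face, and then every $\Lambda_z$, $z\in K$, lies in that bounded face by the equal-or-disjoint property, settling boundedness at once. The paper instead avoids enclosing curves entirely: it covers $K$ together with all components of $E_{f,1}$ in $G$ by finitely many balls, lets $K_2$ be the closure of their union, sets $\rho=\min_{z\in\partial K_2}\bigl||f(z)|-1\bigr|>0$, and notes that level curves through points of $K_2$ at heights within $\rho$ of $1$ cannot cross $\partial K_2$ and so stay in $K_2$, while the union of level curves at heights outside $(1-\rho,1+\rho)$ is bounded because $|f|\to1$ at infinity. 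To complete your proof you need either that argument or the unbounded-boundary-component application; as written, the key claim is unproved.
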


\begin{proof}
We will first show that $\Lambda_K$ is closed in $\mathbb{C}$.  Let $z_0\in{\Lambda_K}^c$ be given.  We will show that there is some $\delta>0$ such that $B_{\delta}(z_0)\subset{\Lambda_K}^c$.

\begin{case}
$z_0\in{cl(G)}$.
\end{case}

Since $z_0\notin{\Lambda_K}$, $\Lambda_{z_0}\cap{K}=\emptyset$.  Thus by Proposition~\ref{Level Curves are Separated from Compact Sets by Another Level Curve}, there is some non-critical level curve $L$ of $f$ in $G$ such that $\Lambda_{z_0}$ is contained in one face of $L$ and $K$ is contained in the other face of $\Lambda_{z_0}$.  Let $D_1$ denote the face of $L$ which contains $z_0$ and let $D_2$ denote the face of $L$ which contains $K$.  For each $z\in{K}$, $z\in{D_2}$ and $\Lambda_z$ is connected and does not intersect $L$, so $\Lambda_z$ is contained in $D_2$.  Therefore $\Lambda_K\subset{D_2}$.  If we choose $\delta>0$ small enough that $B_{\delta}(z_0)\subset{D_1}$, then $B_{\delta}(z_0)\subset{\Lambda_K}^c$.

\begin{case}
$z_0\notin{cl(G)}$.
\end{case}

Since $z_0\notin{cl(G)}$, there is some $\delta>0$ such that $B_{\delta}(z_0)\subset{G^c}$, and thus $B_{\delta}(z_0)\subset{\Lambda_K}^c$.

We conclude that ${\Lambda_K}^c$ is open in $\mathbb{C}$, and thus $\Lambda_K$ is closed in $\mathbb{C}$.

Now we wish to show that $\Lambda_K$ is bounded.  Of course if $G$ is bounded then we are done, so suppose not.  Let $K_1$ denote the union of $K$ with all components of $E_{f,1}$ contained in $G$.  By Proposition~\ref{G has Only Finitely Many Crit. Level Curves.}, there are only finitely many components of $E_{f,1}$ contained in $G$, each of which is compact, so $K_1$ is compact.  Since $K_1$ is compact, there is a sequence of balls $B_1,B_2,\ldots,B_N$ each contained in $G$ such that $K_1\subset\displaystyle\bigcup_{i=1}^NB_i$.  Define $K_2\colonequals\displaystyle{cl\left(\bigcup_{i=1}^NB_i\right)}$.  $K_2$ is compact, so $\partial{K_2}$ is compact.  Therefore we may define $\rho\colonequals\displaystyle\min_{z\in\partial{K_2}}(||f(z)|-1|)$.  Since $\{z\in{G}:|f(z)|=1\}$ is contained in the interior of $K_2$, there is no point on the boundary of $K_2$ at which $|f|$ takes the value $1$.  Therefore we conclude that $\rho>0$.  We will show that $\Lambda_{K_2}$ is bounded, and thus $\Lambda_K$ is bounded as well.

We now break $\Lambda_{K_2}$ into two different sets, each of which we will show is bounded.  Define $F_1\colonequals\{z\in{K_2}:|f(z)|\in(1-\rho,1+\rho)\}$ and $F_2\colonequals\{z\in{K_2}:|f(z)|\notin(1-\rho,1+\rho)\}$.  $K_2=F_1\cup{F_2}$, so $\Lambda_{K_2}=\Lambda_{F_1}\cup\Lambda_{F_2}$.  For any $z\in{F_1}$, $z$ is contained in the interior of $K_2$, and $\Lambda_z$ does not intersect $\partial{K_2}$, so $\Lambda_z\subset{K_2}$.  Therefore $\Lambda_{F_1}\subset{K_2}$, so $\Lambda_{F_1}$ is bounded.  On the other hand, for each $z\in\Lambda_{F_2}$, $|f(z)|\notin(1-\rho,1+\rho)$, and $\displaystyle\lim_{w\in{G},w\to\infty}|f(w)|=1$, so $\Lambda_{F_2}$ is bounded.

Therefore we conclude that $\Lambda_{K_2}$ is bounded, and $K\subset{K_2}$, so $\Lambda_{K}$ is bounded.  Finally we conclude that $\Lambda_K$ is compact.
\end{proof}

\begin{definition}
If $D$ is a face of $\Lambda$, say that $f$ is increasing into $D$ if there is some $\iota>0$ such that $|f|>\epsilon$ on $\{z\in{D}:d(\{z\},\Lambda)<\iota\}$, and say that $f$ is decreasing into $D$ if there is some $\iota>0$ such that $|f|<\epsilon$ on $\{z\in{D}:d(\{z\},\Lambda)<\iota\}$.
\end{definition}

\begin{corollary}\label{f Increases or Decreases into Face of Lambda.}
Let $D$ be a face of $\Lambda$.  Then either $f$ is increasing into $D$ or $f$ is decreasing into $D$.
\end{corollary}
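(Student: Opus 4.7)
Plan. The approach is local-to-global. I would first show that at every point of $\Lambda$ there is a well-defined sign of $|f|-\epsilon$ nearby on the $D$-side, then show this sign is constant along $\partial D$, and finally extract a uniform $\iota$ by compactness.

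At a regular point $z_0$ on an edge of $\Lambda$, $f$ is locally conformal, so a small ball $B_r(z_0)$ is split by the edge into two connected pieces on which $|f|-\epsilon$ takes opposite signs, and these pieces lie in the two distinct faces of $\Lambda$ adjacent to the edge by Proposition~\ref{Each Edge of Lambda Adjacent to Bounded Face.}; the piece lying in $D$ determines a well-defined local sign. At a vertex $a$ of multiplicity $k=\text{mult}(a)$, $f-f(a)$ has a zero of order $k+1$, so $f$ is locally conformally equivalent to $w\mapsto w^{k+1}$, and the $2(k+1)$ edges of $\Lambda$ meeting at $a$ divide a small disk around $a$ into $2(k+1)$ sectors on which $|f|-\epsilon$ has strictly alternating signs. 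If $D$ were to contain two sectors adjacent across a single edge, that edge would border $D$ on both sides, contradicting Proposition~\ref{Each Edge of Lambda Adjacent to Bounded Face.}; hence any two sectors of $D$ at $a$ sit at positions of the same parity in the cyclic arrangement and so carry the same sign, giving a well-defined local sign of $D$ at $a$.

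The tubular picture along edges, together with this sector-parity consistency at vertices, shows that the local sign of $D$ is locally constant as a function on $\partial D$. Since $\Lambda$ is connected (being a single level-curve component) and $D$ is a component of $S^2\setminus\Lambda$, a standard continuum-theoretic fact gives that $\partial D$ is connected, so the locally constant local sign is globally a single value $s_D\in\{+,-\}$. A Lebesgue-number argument on a finite cover of the compact set $\Lambda$ by balls carrying the appropriate local sign then produces a uniform $\iota>0$ such that $|f(z)|-\epsilon$ has sign $s_D$ for every $z\in D$ with $d(z,\Lambda)<\iota$, which is exactly the claim that $f$ is increasing (if $s_D=+$) or decreasing (if $s_D=-$) into $D$.

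The main obstacle I anticipate is justifying connectedness of $\partial D$ cleanly; should this pose difficulty, one can propagate the local sign along $\Lambda$ directly through its planar-graph structure, walking edge-by-edge through shared vertices and invoking the sector-parity consistency at each step, thereby bypassing the continuum-theoretic appeal entirely.
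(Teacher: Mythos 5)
There is a genuine gap, and it sits at the heart of your argument: the claim that at a vertex $a$ ``any two sectors of $D$ at $a$ sit at positions of the same parity'' does not follow from the observation that no two sectors of $D$ are adjacent. In the cyclic arrangement of $2(mult(a)+1)$ sectors this inference is valid only when there are four sectors; as soon as $mult(a)\geq2$ there are at least six sectors, and non-adjacent positions (say the first and the fourth) can have opposite parity, hence opposite signs of $|f|-\epsilon$. So you have not shown that the ``local sign of $D$ at $a$'' is well defined, and this well-definedness is essentially the statement being proved at that vertex. Moreover it cannot be rescued by purely local reasoning: which sectors at $a$ get glued into the same face is a global question about the embedding of $\Lambda$ (Propositions~\ref{Each Edge of Lambda Adjacent to Bounded Face.} and~\ref{Evenly Many Edges Incident to Each Vertex in Lambda.} only constrain adjacent sectors), so the fallback you suggest of walking edge-by-edge through shared vertices inherits the same defect. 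A second, smaller omission: your edge-tube and Lebesgue-number steps tacitly assume that points of $D$ near $\Lambda$ satisfy $|f|\neq\epsilon$, but $D$ may contain other components of $E_{f,\epsilon}$ (and components of $G^c$); one needs Proposition~\ref{G has Only Finitely Many Crit. Level Curves.} or a separation argument to keep these away from $\Lambda$. (Your worry about connectedness of $\partial D$, by contrast, is unfounded -- that part is fine -- but it does not save the vertex step.)

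The missing ingredient is exactly the global input the paper uses. If two near-$\Lambda$ points of $D$ had opposite signs of $|f|-\epsilon$, one would like to join them by a path in $D$ and find an $\epsilon$-point on it; the obstruction is that the path might legitimately cross other pieces of $E_{f,\epsilon}$ or $G^c$ lying inside $D$. The paper's proof removes this obstruction first: it collects the $\epsilon$-points of $D\cap G$ off $\Lambda$ together with the components of $G^c$ in $D$ into a closed set $A$, uses Proposition~\ref{Level Curves are Separated from Compact Sets by Another Level Curve} to interpose a non-critical level curve $L$ at some height $\eta\neq\epsilon$ between $A$ and $\Lambda$, shows via Lemma~\ref{Closed Sets Topological Lemma} that the portion of $D$ between $L$ and $\Lambda$ is connected, and concludes by continuity that $|f|-\epsilon$ has a single sign there, with $\iota=d(L,\Lambda)$. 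Some argument of this kind (or another genuinely global one) must replace your parity step before the local-to-global scheme can close.
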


\begin{proof}
We begin by assuming that $D$ is one of the bounded faces of $\Lambda$.

Define $A$ to be the union of the set $\{z\in{D\cap{G}}:|f(z)|=\epsilon\}$ with any components of $G^c$ contained in $D$.  If $A$ defined as such is empty, then let $A$ be just some single point in $D$.  $A$ is closed, so by Proposition~\ref{Level Curves are Separated from Compact Sets by Another Level Curve}, there is some non-critical level curve of $f$ contained in $D\cap{G}$ which separates $\Lambda$ from $A$.  Call this level curve $L$.  Let $\eta>0$ be such that $|f|\equiv\eta$ on $L$.  Since $L$ separates $A$ from $\Lambda$, $L$ does not intersect $A$, so $\eta\neq\epsilon$.

\begin{case}
$\eta>\epsilon$.
\end{case}

Let $D_1$ denote the face of $L$ which contains $\Lambda$, and let $D_2$ denote the face of $L$ which contains $A$.  Define $D'\colonequals{D_1\cap{D}}$, the portion of $D$ which is "between" $L$ and $\Lambda$.  $D\setminus{D_1}=cl(D_2)$, so $D\setminus{D_1}$ is compact and simply connected.  We may use Lemma~\ref{Closed Sets Topological Lemma} to show that $D\setminus{cl(D_2)}=D\cap{D_1}$ is connected.  Therefore since $|f|\neq\epsilon$ on $D\cap{D_1}$, and $|f|$ is continuous, either $|f|>\epsilon$ on $D\cap{D_1}$, or $|f|<\epsilon$ on $D\cap{D_1}$.  But there are points in $\partial(D\cap{D_1})$ at which $|f|$ takes values greater than $\epsilon$, namely any point in $L$, so there must be points in $D\cap{D_1}$ at which $|f|$ takes values greater than $\epsilon$ by the continuity of $|f|$.  Thus $|f|>\epsilon$ on $D\cap{D_1}$.  Define $\iota\colonequals{d(L,\Lambda)}$.  If $w\in\{z\in{D}:d(\{z\},\Lambda)<\iota\}$, then $w\in{D\cap{D_1}}$, so $|f(w)|>\epsilon$.  Thus $f$ is increasing into $D$.

\begin{case}
$\eta<\epsilon$.
\end{case}

The same argument as above works, with the conclusion that $f$ is decreasing into $D$.

If $D$ is the unbounded face of $\Lambda$, the same argument works with the appropriate minor changes.
\end{proof}

Now we finally make explicit the notion that the components of $E_{f,\eta}$  "vary continuously in $\eta$".  Recall that the Hausdorff distance $\check{d}$ (defined below) may be characterized by saying that for two sets $X,Y\subset\mathbb{C}$ to be close with respect $\check{d}$, each $x\in{X}$ must be close to $Y$, and each $y\in{Y}$ must be close to $X$.

\begin{definition}
For $X,Y\subset\mathbb{C}$, define $\displaystyle{d}_1(X,Y)\colonequals\sup_{x\in X}(d(\{x\},Y))$, and define $\displaystyle{d}_2(X,Y)\colonequals\sup_{y\in Y}(d(X,\{y\}))$.  Then the distance function we are interested in is $\check{d}(X,Y)\colonequals\max(d_1(X,Y),d_2(X,Y))$.  If either $X$ or $Y$ are empty, we define $\check{d}(X,Y)\colonequals\infty$.
\end{definition}

The following proposition states that if $\eta>0$ is sufficiently close to $\epsilon$, then some collection of level curves from $E_{f,\eta}$ is close to $\Lambda$ with respect to $\check{d}$.  First we will list a fact to be used in the proof.

\begin{fact}\label{Finite Set in D Approximates Boundary of D.}
Let $D\subset\mathbb{C}$ be a non-empty set such that $\partial{D}$ is bounded.  Then if $\delta>0$ is given, there is some finite subset $A\subset{D}$ such that $\check{d}(A,\partial{D})<\delta$.
\end{fact}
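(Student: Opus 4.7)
The plan is to reduce the claim to a finite covering argument built on the compactness of $\partial D$. Since $\partial D = \overline{D} \cap \overline{D^c}$ is closed by definition and bounded by hypothesis, it is a compact subset of $\mathbb{C}$. We also need $\partial D$ to be non-empty for the conclusion to be meaningful, since otherwise $\check{d}(A,\partial D) = \infty$ by convention; this rules out only the degenerate case $D = \mathbb{C}$, which we tacitly exclude.

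First I would cover $\partial D$ by finitely many open balls of radius $\delta/2$ centered on $\partial D$: by compactness there exist $p_1, \ldots, p_n \in \partial D$ with $\partial D \subset \bigcup_{i=1}^n B_{\delta/2}(p_i)$. Next, since each $p_i \in \partial D \subset \overline{D}$, every neighborhood of $p_i$ meets $D$, so I can select a point $a_i \in D \cap B_{\delta/2}(p_i)$ for each $i$. Setting $A \colonequals \{a_1, \ldots, a_n\}$ yields a finite subset of $D$, which is the candidate.

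Finally, to verify $\check{d}(A,\partial D) < \delta$, I would check each of the two constituent suprema from the definition. For $d_1(A,\partial D)$: each $a_i$ satisfies $d(\{a_i\}, \partial D) \leq |a_i - p_i| < \delta/2$, since $p_i \in \partial D$. For $d_2(A,\partial D)$: given $q \in \partial D$, choose an index $i$ with $|q - p_i| < \delta/2$; the triangle inequality then gives $|q - a_i| \leq |q - p_i| + |p_i - a_i| < \delta$, so $d(A, \{q\}) < \delta$. Taking the maximum of the two bounds yields $\check{d}(A,\partial D) < \delta$, as required. No serious obstacle arises in this argument — the statement is essentially a packaging of compactness together with the defining property of boundary points, and the only mild care needed is in justifying that a boundary point of $D$ admits arbitrarily close approximants drawn from $D$ itself.
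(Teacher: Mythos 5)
Your proof is correct and is essentially the paper's own argument: the paper gives no proof beyond the remark that the fact ``follows from the compactness of $\partial D$,'' and your finite cover of $\partial D$ by $\delta/2$-balls, with representatives drawn from $D$ via $\partial D\subset\mathrm{cl}(D)$, is the standard way to flesh that remark out. The only microscopic point worth noting is that the step from ``each $q\in\partial D$ satisfies $d(A,\{q\})<\delta$'' to ``$d_2(A,\partial D)<\delta$'' is not automatic for an infinite supremum, but it is rescued by the finiteness of your cover, since $d_2(A,\partial D)\leq\frac{\delta}{2}+\max_i|p_i-a_i|<\delta$.
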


This follows from the compactness of $\partial{D}$.

\begin{proposition}\label{Level Curves Vary Continuously}
Let $\delta>0$ be given.  Then there is some $\eta\in(0,\epsilon)$ such that for each $\zeta\in(\epsilon-\eta,\epsilon+\eta)$, there is some collection $L_1,\ldots,L_N$ of level curves of $f$ contained in $G$ such that $|f|\equiv\zeta$ on each $L_i$, and $\check{d}\left(\displaystyle\left(\bigcup_{i=1}^NL_i\right),\Lambda\right)<\delta$.
\end{proposition}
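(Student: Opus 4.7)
The plan is to use Proposition~\ref{Level Curves are Separated from Compact Sets by Another Level Curve} to trap near-$\epsilon$ level curves inside a $\delta$-neighborhood of $\Lambda$, and then use the Open Mapping Theorem at each point of $\Lambda$ to manufacture enough such curves to Hausdorff-approximate $\Lambda$ from inside every face.

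First I would shrink $\delta$ so that $\{z : d(z,\Lambda) < \delta\} \subset G$ and so that $\Lambda$ is the only component of $E_{f,\epsilon}$ meeting this neighborhood (possible by Proposition~\ref{G has Only Finitely Many Crit. Level Curves.}). Enumerate the finitely many faces $D_0, \ldots, D_m$ of $\Lambda$; for each $D_j$ I choose a closed set $K_j \subset D_j \cap sc(G)$ containing every point of $D_j \cap sc(G)$ at distance at least $\delta/2$ from $\Lambda$, and apply Proposition~\ref{Level Curves are Separated from Compact Sets by Another Level Curve} to the pair $(\Lambda, K_j)$. This yields a threshold $\delta_j > 0$ such that whenever $z \in D_j$ with $d(z,\Lambda) < \delta_j$ and $|f(z)| \neq \epsilon$, the curve $\Lambda_z$ is a non-critical level curve of $f$ in $G$ strictly separating $\Lambda$ from $K_j$; since $\Lambda_z \subset G$ cannot enter an unbounded component of $G^c$ and cannot itself be unbounded, it is forced into $\{z : d(z, \Lambda) < \delta/2\}$. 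Set $\delta^* := \min_j \delta_j > 0$.

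For each $p \in \Lambda$, the Open Mapping Theorem furnishes a radius $r_p < \min(\delta^*, \delta/4)$ with $B_{r_p}(p) \subset G$ and an $\eta_p > 0$ with $B_{\eta_p}(f(p)) \subset f(B_{r_p}(p))$, so every $\zeta \in (\epsilon-\eta_p, \epsilon+\eta_p)$ is attained by $|f|$ on some point of $B_{r_p}(p)$. Compactness of $\Lambda$ gives a finite subcover $\{B_{r_{p_i}}(p_i)\}_{i=1}^N$; set $\eta := \min_i \eta_{p_i}$. For any $\zeta \in (\epsilon - \eta, \epsilon + \eta)$, pick $z_i \in B_{r_{p_i}}(p_i)$ with $|f(z_i)| = \zeta$ (taking $z_i = p_i$ if $\zeta = \epsilon$) and define $L_i := \Lambda_{z_i}$. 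The trap from the previous paragraph confines each $L_i$ to $\{z : d(z,\Lambda) < \delta/2\}$, and every $p \in \Lambda$ lies in some $B_{r_{p_i}}(p_i)$, so $d(p, L_i) \leq d(p, z_i) \leq d(p, p_i) + d(p_i, z_i) < 2 r_{p_i} < \delta/2$. Both $d_1$ and $d_2$ are thus bounded by $\delta/2$, giving $\check{d}(\bigcup_i L_i, \Lambda) < \delta$.

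The hard part is the bookkeeping around the trap: one must verify that $K_j$ is a valid input to Proposition~\ref{Level Curves are Separated from Compact Sets by Another Level Curve}---a closed subset of $sc(G)$ lying in a single component of $\Lambda^c$---and that $\Lambda_{z_i}$ really cannot leave the $\delta/2$-neighborhood of $\Lambda$, neither through unbounded components of $G^c$ (it cannot, being in $G$) nor off to infinity (it cannot, by the no-unbounded-level-curves assumption). The Open Mapping step and the final Hausdorff-distance estimate are then routine.
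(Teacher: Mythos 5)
Your strategy is sound and uses the paper's two main ingredients---Proposition~\ref{Level Curves are Separated from Compact Sets by Another Level Curve} to confine nearby level curves to a small neighborhood of $\Lambda$, and the Open Mapping Theorem to produce witness points at level $\zeta$---but your handling of the $d_2$ half of the Hausdorff estimate is genuinely different and simpler. The paper fixes one interior point per \emph{edge} of $\Lambda$ and must then show that each single curve $\Lambda_{w_i}$ comes within $\delta/2$ of \emph{every} point of the boundary of its face; this is why its application of the separation proposition folds a finite $\delta/4$-net of $\partial{D}$ into the compact set $K$, so that the nearby curve separates each net point from $\partial{D}$. You avoid that device by covering all of $\Lambda$ with small balls and noting that the witness $z_i$ itself lies within $2r_{p_i}<\delta/2$ of any $p\in\Lambda$ in the same ball, so the union of the curves is automatically close to every point of $\Lambda$; the $d_1$ half (confinement) is handled essentially as in the paper.

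One step needs repair: for the unbounded face $D_j$ of $\Lambda$, the set $K_j$ you describe need not exist. If $G^c$ has an unbounded component (e.g.\ $G=\mathbb{D}$), the points of $D_j\cap{sc(G)}$ at distance at least $\delta/2$ from $\Lambda$ accumulate on $\partial{G}$, so their closure leaves $sc(G)$, and hence no closed set $K_j\subset{D_j}\cap{sc(G)}$ can contain them all. The cure is to take $K_j\colonequals\{w\in{D_j}:d(\{w\},\Lambda)\geq\frac{\delta}{2}\}$, the full far set of the face, which is closed because $\partial{D_j}\subset\Lambda$. For a bounded face this set lies in $sc(G)$ automatically (any component of $G^c$ meeting a bounded face is a bounded component); for the unbounded face the extra points lie in unbounded components of $G^c$, which are disjoint from any level curve $\Lambda_z\subset{G}$ and lie automatically in its unbounded face, so the separation argument is unaffected---this is in effect what the paper's own proof does, since its $K$ is not intersected with $sc(G)$ either. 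With that adjustment (and the trivial remark that a face whose far set is empty needs no trap, since then the whole face lies within $\delta/2$ of $\Lambda$), your argument goes through.
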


\begin{proof}
Let $\delta>0$ be given.  Let $E_1,E_2,\ldots,E_N$ be an enumeration of the edges of $\Lambda$, and for each $i\in\{1,2,\ldots,N\}$, let $z_i$ be some fixed interior point of $E_i$ (that is, a point in $E_i$ which is not an endpoint of $E_i$).  Our first goal is to find an $\iota>0$ small enough so that each of the following hold:

\begin{enumerate}
	\item\label{item: 1.}
	For each $i\in\{1,2,\ldots,N\}$, the only faces of $\Lambda$ that intersect $B_{\iota}(z_i)$ are the two faces adjacent to $E_i$.

	\item\label{item: 2.}
	If $D$ is one of the faces of $\Lambda$, and $z\in{D}$ is less than $\iota$ away from $\Lambda$, then $\check{d}(\Lambda_z,\partial{D})<\frac{\delta}{2}$.

	\item\label{item: 3.}
	If $D$ is one of the faces of $\Lambda$, and we define $A\colonequals\{z\in{D}:d(\{z\},\Lambda)<\iota\}$, then either $|f|<\epsilon$ on $A$ or $|f|>\epsilon$ on $A$.
\end{enumerate}

Suppose an $\iota$ may be found which satisfies each of these three items.  Our next goal is to find some $\eta\in(0,\epsilon)$ small enough so that if $\zeta\in(\epsilon-\eta,\epsilon+\eta)$, then for each $i\in\{1,2,\ldots,N\}$, there is some point in ${B_{\iota}(z_i)}$ at which $|f|$ takes the value $\zeta$.  Suppose such an $\eta>0$ may be found.  We now show that the statement of the proposition holds for this $\delta$ and $\eta$.

Let $\zeta\in(\epsilon-\eta,\epsilon+\eta)$ be given.  Of course if $\zeta=\epsilon$, then by putting $N=1$ and $L_1=\Lambda$, the statement of the proposition obviously holds, so let us assume that $\zeta\neq\epsilon$.  For each $i\in\{1,2,\ldots,N\}$, let $w_i$ be a fixed point in $B_{\iota}(z_i)$ at which $|f|$ takes the value $\zeta$, which may be found by Item~\ref{item: 3.} above.  Define $\mathcal{L}\colonequals\displaystyle\bigcup_{i=1}^N\Lambda_{w_i}$.

\begin{claim}
$\check{d}(\mathcal{L},\Lambda)<\delta$.
\end{claim}

Let $x\in\mathcal{L}$ be given.  Let $i\in\{1,2,\ldots,N\}$ be such that $x\in\Lambda_{w_i}$.  Since $d(\{w_i\},\Lambda)<\iota$, we have that $\check{d}(\Lambda_{w_i},\Lambda)<\frac{\delta}{2}$ by Item~\ref{item: 2.}, and thus $d(\{x\},\Lambda)<\frac{\delta}{2}$.  Therefore $d_1(\mathcal{L},\Lambda)<\delta$.

Let $x\in\Lambda$ be given.  Let $i\in\{1,2,\ldots,N\}$ be such that $x\in{E_i}$.  Let $D$ denote the face of $\Lambda$ which contains $w_i$.  $d(\{z_i\},\{w_i\})<\iota$, so by Item~\ref{item: 1.} above, $E_i\subset\partial{D}$.  And $d(\{w_i\},\Lambda)<\iota$, so by Item~\ref{item: 2.} above, $\check{d}(\Lambda_{w_i},\partial{D})<\frac{\delta}{2}$.  Since $x\in{E_i}\subset\partial{D}$, we conclude that $d(\Lambda_{w_i},\{x\})<\frac{\delta}{2}$.  Since $x\in\Lambda$ was chosen arbitrarily, we conclude that $d_2(\mathcal{L},\Lambda)<\delta$.

Finally we conclude that $\check{d}(\mathcal{L},\Lambda)<\delta$, and we are done, subject to finding the specified $\iota$ and $\eta$.

Of course if we show that a positive constant may be chosen to satisfy each of the three above items individually, then the minimum of the three constants will have the properties desired in a choice of $\iota$.

We first show that $\iota$ may be chosen to satisfy Item~\ref{item: 1.}.  Let $i\in\{1,2,\ldots,N\}$ be given.  Define ${E_i}'$ to be the points in $E_i$ which are not endpoints of $E_i$.  Then $\Lambda\setminus{E_i}'$ is closed, so $r_i\colonequals{d(\{z_i\},\Lambda\setminus{E_i}')}>0$.  Now if $D$ is a face of $\Lambda$ such that $z_i\notin\partial{D}$ (and thus $\partial{D}\subset\Lambda\setminus{E_i}'$), and $w\in{D}$, then any path from $w$ to $z_i$ intersects $\partial{D}$, in particular the straight line path.  Therefore

\[
d(\{z_i\},\{w\})\geq{d(\{z_i\},\partial{D})}\geq{d(\{z_i\},\Lambda\setminus{E_i}')}=r_i,
\]

and therefore we conclude that $D$ does not intersect $B_{r_i}(z_i)$.  If we choose $\iota>0$ smaller than each $r_i$, this $\iota$ satisfies Item~\ref{item: 1.}.

We now show that $\iota$ may be chosen to satisfy Item~\ref{item: 2.}.  Let $D$ be one of the faces of $\Lambda$.  By Fact~\ref{Finite Set in D Approximates Boundary of D.}, there is a finite sequence of points $x_1,x_2,\ldots,x_k\in{D}$ such that $\displaystyle\check{d}\left(\bigcup_{i=1}^k\{x_i\},\partial{D}\right)<\frac{\delta}{4}$.  Define

\[
K\colonequals\{x_i\}_{i=1}^k\cup\left\{z\in{D}:d(\{z\},\partial{D})\geq\frac{\delta}{4}\right\}.
\]

By Proposition~\ref{Level Curves are Separated from Compact Sets by Another Level Curve}, there is some $\iota_D>0$ such that if $z\in{D}$ is less than $\iota_D$ away from $\Lambda$, then $\Lambda_z$ contains $\Lambda$ in one face and $K$ in the other.  Fix some $w\in{D}$ less than $\iota_D$ away from $\Lambda$.

\begin{claim}
$\check{d}(\Lambda_w,\partial{D})<\frac{\delta}{2}$.
\end{claim}

Let $w_0\in\Lambda_w$ be given.  Since $\Lambda_w$ is contained in $D\setminus{K}$, we have $\Lambda_w\subset\left\{z\in{D}:d(\{z\},\partial{D})<\frac{\delta}{4}\right\}$.  Thus $d(\{w_0\},\partial{D})<\frac{\delta}{4}$, and thus $d_1(\Lambda_w,\partial{D})<\frac{\delta}{2}$.

Let $z_0\in\partial{D}$ be given.  Then for some $i\in\{1,2,\ldots,k\}$, $d(\{x_i\},\{z_0\})<\frac{\delta}{4}$.  But $x_i\in{K}$, so $x_i$ and $z_0$ are in different faces of $\Lambda_w$.  Since $x_i$ and $z_0$ are in different faces of $\Lambda_w$, the straight line path from $z_0$ to $x_i$ intersects $\Lambda_w$, and thus $d(\Lambda_w,\{z_0\})\leq{d(\{x_i\},\{z_0\})}<\frac{\delta}{4}$.  Thus $d_2(\Lambda_w,\partial{D})<\frac{\delta}{2}$.

Therefore we conclude that $\check{d}(\Lambda_w,\partial{D})<\frac{\delta}{2}$.  Define $\iota\colonequals\displaystyle\min(\iota_D:D\text{ is a face of }\Lambda)$.  Then this $\iota$ satisfies Item~\ref{item: 2.}.

Finally the fact that $\iota$ may be chosen to satisfy Item~\ref{item: 3.} follows directly from Corollary~\ref{f Increases or Decreases into Face of Lambda.}.

The fact that an $\eta>0$ may be found with the desired property follows directly from the Open Mapping Theorem.  Since the desired $\iota>0$ and $\eta>0$ may be found, we are done.
\end{proof}

The next result is one of the most important results of the paper.  This theorem states that if any two level curves of $f$ in $G$ are exterior to each other, then there is a critical level curve of $f$ in $G$ which contains the two level curves in different bounded faces.  We will conclude from this that if we remove all critical level curves from $G$, each component of the remaining set will be conformally equivalent to a disk or an annulus.  If, in addition, we remove the zeros and poles of $f$ from $G$, then each component of the remaining set will be conformally equivalent to an annulus.

\begin{theorem}\label{Two Level Curve Implies Crit. Level Curve.}
Let each of $L_1$ and $L_2$ be a level curve of $f$ contained in $G$ or a bounded components of $G^c$.  Let $F_1$ denote the unbounded face of $L_1$ and $F_2$ the unbounded face of $L_2$.  If $L_1\subset{F_2}$, and $L_2\subset{F_1}$, then there is some $w\in{G}$ which lies in $F_1\cap{F_2}$, such that $f'(w)=0$ and $L_1$ and $L_2$ are contained in different bounded faces of $\Lambda_w$.
\end{theorem}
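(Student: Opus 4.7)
The plan is to locate $w$ at the critical value of $|f|$ where a one-parameter family of level curves enclosing $L_1$ first fails to exclude $L_2$. By Corollary~\ref{f Increases or Decreases into Face of Lambda.}, $f$ is either increasing or decreasing into $F_1$ from $L_1$; I will treat the increasing case, the other being symmetric. Set $\epsilon_i = |f|$ on $L_i$; both $L_1$ and $L_2$ are bounded by the standing hypotheses on $G$. Applying Proposition~\ref{Level Curves are Separated from Compact Sets by Another Level Curve} with $L = L_1$ and $K = L_2$ (permitted since $L_1$ is bounded and $L_2 \subset F_1$), for every $z \in F_1$ sufficiently close to $L_1$ the curve $\Lambda_z$ is a non-critical level curve at some height $|f(z)| > \epsilon_1$ with $L_1$ in a bounded face of $\Lambda_z$ and $L_2$ in the unbounded face.

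Set
\[
\mathcal{T} := \{\zeta > \epsilon_1 : \text{some level curve } L \subset G \text{ at height } \zeta \text{ satisfies } L_1 \prec L \text{ and } L_2 \subset (\text{unbounded face of } L)\}.
\]
The previous paragraph shows $\mathcal{T} \supset (\epsilon_1, \epsilon_1 + \delta)$ for some $\delta > 0$. Proposition~\ref{Level Curves Vary Continuously} implies that every non-critical $\zeta \in \mathcal{T}$ has a whole neighborhood in $\mathcal{T}$, while Proposition~\ref{G has Only Finitely Many Crit. Level Curves.} bounds the critical heights of $f$ in $G$. Furthermore, $\mathcal{T}$ is bounded above: once $\zeta$ exceeds every critical value of $f$ in $G$, the level curves of $f$ at height $\zeta$ in $G$ are forced into arbitrarily small neighborhoods of the poles of $f$ and cannot enclose $L_1$. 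Let $\zeta^* := \sup\{\zeta > \epsilon_1 : (\epsilon_1, \zeta) \subset \mathcal{T}\}$; then $\zeta^*$ is finite and, by the openness of $\mathcal{T}$ at non-critical heights, must equal a critical height, so there is a critical point $w \in G$ with $|f(w)| = \zeta^*$, with $\Lambda_w$ the corresponding critical level curve.

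Finally, I would verify that $L_1$ and $L_2$ lie in distinct bounded faces of $\Lambda_w$ and that $w \in F_1 \cap F_2$. Choosing $\zeta_n \nearrow \zeta^*$ in $\mathcal{T}$ with witness level curves $L_n$, Corollary~\ref{If K is Compact then Lambda_K is Compact.} confines the $L_n$ to a common compact subset of $G$, and Proposition~\ref{Level Curves Vary Continuously} lets us pass to a subsequence converging in Hausdorff distance to a subset that contains (or coincides with) $\Lambda_w$; continuity of the enclosure relation then places $L_1$ in a bounded face of $\Lambda_w$. For values $\zeta_m \searrow \zeta^*$, which lie outside $\mathcal{T}$ by the choice of $\zeta^*$, every level curve close to $\Lambda_w$ at height $\zeta_m$ that encloses $L_1$ must also enclose $L_2$; passing to the limit places $L_2$ in a bounded face of $\Lambda_w$ as well. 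These two bounded faces must be distinct, since each $L_n$ already separated $L_1$ from $L_2$. Since $w \in \Lambda_w$ is disjoint from $L_1 \cup L_2$, we conclude $w \in F_1 \cap F_2$. The delicate part will be controlling the Hausdorff limits -- verifying that the limiting configuration is captured by the single connected critical level curve $\Lambda_w$ rather than a disjoint union of components of $E_{f,\zeta^*}$, and that the non-critical bounded faces of the $L_n$ open up into distinct bounded faces of $\Lambda_w$; this is where I expect to combine Proposition~\ref{Level Curves Vary Continuously} with the planar-graph description of critical level curves from Propositions~\ref{Lambda is a planar graph.} and \ref{Evenly Many Edges Incident to Each Vertex in Lambda.}.
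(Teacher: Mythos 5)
Your approach is genuinely different from the paper's, and the difference is instructive. The paper fixes a path $\gamma$ from $L_1$ to $L_2$ lying in $G\cap F_1\cap F_2$ (connected by Lemma~\ref{Closed Sets Topological Lemma}), sets $K_r \colonequals \Lambda_{\gamma(r)}$, defines $A$ to be the set of $r$ for which $K_r$ separates $L_1$ from $L_2$ in the desired way, and then studies $r_1 \colonequals \sup A$. Because $K_{r_1}$ is \emph{the} level curve through $\gamma(r_1)$, no selection or limit is needed: each of the three claims ($L_1\prec K_{r_1}$, $L_2\prec K_{r_1}$, $L_1$ and $L_2$ in distinct bounded faces) is proved by a short contradiction, using Proposition~\ref{Level Curves are Separated from Compact Sets by Another Level Curve} to insert a separating non-critical level curve and the continuity of $\gamma$ to violate the definition of $r_1$. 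You instead parametrize by the \emph{height} $\zeta$, define a set $\mathcal{T}\subset(\epsilon_1,\infty)$ of heights at which some curve separates, and take $\zeta^* = \sup$. Both are sweeping arguments, but yours lives in the range, the paper's in the domain.

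The genuine gap is exactly where you flag it: the Hausdorff-limit step. Two concrete difficulties. First, at a given height $\zeta\in\mathcal{T}$ there may be several level curves, and your definition requires choosing one; passing a sequence $L_n$ at heights $\zeta_n\nearrow\zeta^*$ to a Hausdorff limit gives a connected compact subset of $E_{f,\zeta^*}$, but that subset could be a \emph{proper} subset of the critical level curve $\Lambda_w$, and you then need to argue that $L_1$ and $L_2$ still land in \emph{distinct bounded faces of $\Lambda_w$}, not merely of the limit set. Second, Proposition~\ref{Level Curves Vary Continuously} gives a one-directional statement: near a fixed level curve $\Lambda$ at height $\epsilon$ there are level curves at nearby heights; it does not say that every level curve at a height near $\zeta^*$ is close to some critical level curve at $\zeta^*$, which is what your limit argument silently requires. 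Neither difficulty is fatal, but filling them would take roughly as much work as the paper's argument. You also do not address the case where $L_1$ or $L_2$ is a bounded component of $G^c$ rather than a level curve of $f$ (the paper's Case 2, handled by replacing a boundary component with a nearby separating non-critical level curve via Proposition~\ref{Level Curves are Separated from Compact Sets by Another Level Curve}); in particular Corollary~\ref{f Increases or Decreases into Face of Lambda.} as you invoke it applies to level curves, not boundary components. I'd suggest either reducing to the paper's Case 1 as the paper does, or reworking the opening step.

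Finally, the claim that $w\in F_1\cap F_2$ does follow, but not merely from $w\notin L_1\cup L_2$: you need the observation that a connected set ($\Lambda_w$) which contains $L_1$ in one of its bounded faces cannot itself lie in a bounded face of $L_1$, hence $\Lambda_w\subset F_1$, and symmetrically $\Lambda_w\subset F_2$. Worth spelling out.
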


\begin{proof}
Define $\epsilon_1\colonequals|f(L_1)|$, and $\epsilon_2\colonequals|f(L_2)|$.

\begin{case}\label{L_1 and L_2 are Level Curves of f in G.}
Both $L_1$ and $L_2$ are level curves of $f$ in $G$.
\end{case}

$G\cap{F_1}\cap{F_2}$ is open, and by Lemma~\ref{Closed Sets Topological Lemma}, it is connected.  From this it is easy to show that we can find a path $\gamma:[0,1]\to{G}$ such that $\gamma(0)\in{L_1}$ and $\gamma(1)\in{L_2}$, and for all $r\in(0,1)$, $\gamma(r)\in{G}\cap{F_1\cap{F_2}}$.  For $r\in(0,1)$, define $K_r\colonequals\Lambda_{\gamma(r)}$, and define $A\subset(0,1)$ be the set such that $r\in{A}$ if and only if $K_r$ contains $L_1$ in one of its bounded faces $L_2$ in its unbounded face.  Clearly if $L$ is any level curve of $f$ in $G$ such that $L_1$ and $L_2$ are in different faces of $L$, then $L$ intersects the path $\gamma$.  Thus Proposition~\ref{Level Curves are Separated from Compact Sets by Another Level Curve} guarantees that $A$ is non-empty.  So if we define $r_1\colonequals\sup(r\in(0,1):r\in{A})$, we have $r_1\in(0,1]$.

\begin{claim}
$r_1<1$.
\end{claim}

Proposition~\ref{Level Curves are Separated from Compact Sets by Another Level Curve} also implies that we may find some $s\in(0,1)$ such that $K_s$ contains $L_2$ in one of its bounded faces and $L_1$ in its unbounded face.  Let $D$ denote the face of $K_s$ which contains $L_2$.  Since $K_s$ and $L_2$ are compact, the distance between $K_s$ and $L_2$ is greater than zero, so because $\gamma$ is continuous, there is some $\eta>0$ such that if $r\in(1-\eta,1]$, $\gamma(r)$ is contained in $D$.  Therefore for all $r\in(1-\eta,1)$, $K_r$ is contained in $D$, so the bounded faces of $K_r$ are contained in $D$.  Thus for all $r\in(1-\eta,1)$, $r$ is not in $A$.  From this we may conclude that $r_1<1$.

We will now show that $K_{r_1}$ contains a critical point of $f$, and that $L_1$ and $L_2$ are contained in different bounded faces of $K_{r_1}$.

\begin{claim}\label{L_1 prec K_{r_1}}
$L_1\prec{K_{r_1}}$.
\end{claim}

Since $\gamma(r_1)\in{F_1\cap{F_2}}$, $K_{r_1}\subset{F_1\cap{F_2}}$.  Suppose by way of contradiction that $L_1$ is contained in the unbounded face of $K_{r_1}$.  Then by Proposition~\ref{Level Curves are Separated from Compact Sets by Another Level Curve}, there is some non-critical level curve $\Lambda_1$ of $f$ contained in $G$ such that $K_{r_1}$ is contained in the bounded face of $\Lambda_1$ and $L_1$ is contained in the unbounded face of $\Lambda_1$.  Then one may easily use the continuity of $\gamma$ to show there is some $s\in(0,r_1)$ such that for each $r\in(s,r_1]$, $K_r$ does not contain $L_1$ in any of its bounded faces (since $\gamma(r)$ is in the bounded face of $L_1$).  Therefore for each $r\in(s,r_1]$, $r\notin{A}$, which is a contradiction of the definition of $r_1$.  Thus we conclude that $L_1\prec{K_{r_1}}$.

\begin{claim}
$L_2\prec{K_{r_1}}$.
\end{claim}

Suppose by way of contradiction that $L_2$ is in the unbounded face of $K_{r_1}$.  Then Proposition~\ref{Level Curves are Separated from Compact Sets by Another Level Curve} gives that there is some non-critical level curve $\Lambda_2$ of $f$ in $G$ such that $K_{r_1}$ in the bounded face of $\Lambda_2$, and $L_2$ in the unbounded face of $\Lambda_2$.  Thus $\gamma(r_1)$ is in the bounded face of $\Lambda_2$, and $\gamma(1)$ is in the unbounded face of $\Lambda_2$, so by the continuity of $\gamma$, there is some $r\in(r_1,1)$ such that $\gamma(r)\in\Lambda_2$ (and thus $K_r=\Lambda_2$).  But $L_1$ is in one of the bounded faces of $K_{r_1}$, so $L_1$ is in the bounded face of $\Lambda_2$ as well, and thus $r\in{A}$.  However this is a contradiction of the definition of $r_1$.  We conclude that $L_2$ is contained in one of the bounded faces of $K_{r_1}$.  That is, $L_2\prec{K_{r_1}}$.

Thus $L_1$ and $L_2$ are each contained in a bounded face of $K_{r_1}$.  We now wish to show that $L_1$ and $L_2$ are contained in different bounded faces of $K_{r_1}$.

\begin{claim}
$L_1$ and $L_2$ are contained in different bounded faces of $K_{r_1}$.
\end{claim}

We use an almost identical argument to that found in Claim~\ref{L_1 prec K_{r_1}}.  Suppose by way of contradiction that $L_1$ and $L_2$ are contained in the same bounded face of $K_{r_1}$.  Let $D$ denote the face of $K_{r_1}$ containing $L_1$ and $L_2$.  Again by Proposition~\ref{Level Curves are Separated from Compact Sets by Another Level Curve}, there is some non-critical level curve $\Lambda_3$ of $f$ contained in $D$ such that $L_1$ and $L_2$ are in the bounded face of $\Lambda_3$, and $K_{r_1}$ is in the unbounded face of $\Lambda_3$.  Therefore $\gamma(0)$ is in the bounded face of $\Lambda_3$ and $\gamma(r_1)$ is in the unbounded face of $\Lambda_3$.  Thus by the continuity of $\gamma$ there is some $s\in(0,r_1)$ such that for each $r\in(s,r_1]$, $\gamma(r)$ is in the unbounded face of $\Lambda_3$.  Fix for the moment some $r\in(s,r_1]$.  $K_r$ is contained in the unbounded face of $\Lambda_3$, and $L_1$ and $L_2$ are both contained in the bounded face of $\Lambda_3$.  Therefore since $K_r$ does not intersect $\Lambda_3$, $L_1$ is in the bounded face of $K_r$ if and only if $L_2$ is as well.  Thus $r\notin{A}$ for each $r\in(s,r_1]$, which contradicts the definition of $r_1$.

We conclude that $L_1$ and $L_2$ are contained in different bounded faces $K_{r_1}$, which implies that $K_{r_1}$ contains a zero of $f'$ by Corollary~\ref{Vertices are Crit. Points of f.}, and we are done.

\begin{case}
At least one of $L_1$ and $L_2$ are bounded components of $\partial{G}$.
\end{case}

Suppose that $L_1$ is a bounded component of $G^c$.  $L_2$ is compact, so by Proposition~\ref{Level Curves are Separated from Compact Sets by Another Level Curve}, we may find a non-critical level curve ${L_1}'$ of $f$ in $G$ such that $L_1$ is contained in the bounded face of ${L_1}'$ and $L_2$ is contained in the unbounded face of ${L_1}'$.  If $L_2$ is a bounded component of $G^c$, the same use of Proposition~\ref{Level Curves are Separated from Compact Sets by Another Level Curve} gives us a level curve ${L_2}'$ of $f$ in $G$ such that $L_2$ is contained in the bounded face of ${L_2}'$ and ${L_1}'$ (and thus $L_1$) is contained in the unbounded face of ${L_1}'$.  Then we can use the preceding case with ${L_1}'$ and ${L_2}'$, which then gives us the desired result for $L_1$ and $L_2$.
\end{proof}

Let us now extend the $\prec$-ordering to the bounded components of $\partial{G}$ as follows.

\begin{definition}
If $K\subset{G}$ is a bounded component of $\partial{G}$, and $L$ is a level curve of $f$ in $G$ such that $K$ is contained in one of the bounded faces of $L$, then we write $K\prec{L}$.
\end{definition}

Next we have two corollaries to the previous theorem guaranteeing the existence of a unique $\prec$-maximal critical level curve of $f$ in certain regions of $G$, but first a definition.

\begin{definition}
Define 

\[
\mathcal{B}\colonequals\{z\in{G}:f'(z)=0\text{ or }f(z)=0\text{ or }f(z)=\infty\},
\]

and define 

\[
\mathcal{C}\colonequals\displaystyle\left(\bigcup_{z\in\mathcal{B}}\Lambda_z\right)\cup\{z\in\partial{G}:z\text{ is in a bounded component of }\partial{G}\}.
\]

Recall that since $\displaystyle\lim_{z\in{G},z\to\infty}f(z)=1$, $f$ has only finitely many zeros and poles in $G$, and $G^c$ was stipulated to have only finitely many bounded components, so Proposition~\ref{G has Only Finitely Many Crit. Level Curves.} immediately implies that $\mathcal{B}$ is finite, and thus that $\mathcal{C}$ has finitely many components.
\end{definition}

\begin{corollary}\label{Existence of Prec Maximal Element in G}
There is a unique $\prec$-maximal component of $\mathcal{C}$.
\end{corollary}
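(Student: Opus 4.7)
The plan is to prove existence by a finiteness argument and uniqueness by contradiction using Theorem~\ref{Two Level Curve Implies Crit. Level Curve.}.

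For existence, the definition of $\mathcal{C}$ together with Proposition~\ref{G has Only Finitely Many Crit. Level Curves.} and the standing hypothesis on $G^c$ imply that $\mathcal{C}$ has only finitely many components: the critical level curves of $f$ in $G$ and the bounded components of $\partial G$, all mutually disjoint. Extending $\prec$ to arbitrary pairs of components of $\mathcal{C}$ by declaring $C_1 \prec C_2$ whenever $C_1$ lies in a bounded face of $C_2$ yields a strict partial order on this finite set (transitivity and antisymmetry both follow from the observation that every bounded face of a set contained in a bounded face of $C$ is itself contained in that face of $C$), so a $\prec$-maximal component must exist.

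For uniqueness, suppose for contradiction that $C_1$ and $C_2$ are two distinct $\prec$-maximal components of $\mathcal{C}$. Since $C_1$ and $C_2$ are disjoint and connected, each lies in a single face of the complement of the other; by maximality, neither lies in a bounded face of the other, so each must lie in the unbounded face of the other. Each $C_i$ is either a critical level curve of $f$ in $G$ or a bounded component of $\partial G$ (and in the latter case shares its unbounded face with the bounded component of $G^c$ enclosing it), so Theorem~\ref{Two Level Curve Implies Crit. Level Curve.} applies and produces some $w \in G$ with $f'(w)=0$ such that $\Lambda_w$ contains $C_1$ and $C_2$ in different bounded faces. Since $w \in \mathcal{B}$, the level curve $\Lambda_w$ is itself a component of $\mathcal{C}$ (a single level curve, disjoint from the other critical level curves and from $\partial G$), yet $C_1 \prec \Lambda_w$ and $C_2 \prec \Lambda_w$, contradicting the maximality of $C_1$.

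The main obstacle is verifying that Theorem~\ref{Two Level Curve Implies Crit. Level Curve.} applies cleanly when one or both of $C_1, C_2$ is a boundary component rather than a level curve in $G$; the remedy is to pass to the enclosing bounded component of $G^c$, which shares the same unbounded face, so that the hypothesis of mutual exteriority and the conclusion about bounded faces of $\Lambda_w$ both transfer back to $C_1$ and $C_2$ themselves.
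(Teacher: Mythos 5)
Your proposal is correct and takes essentially the same route as the paper: existence from the finiteness of the components of $\mathcal{C}$, and uniqueness by applying Theorem~\ref{Two Level Curve Implies Crit. Level Curve.} to two putative $\prec$-maximal components (passing to the enclosing bounded component of $G^c$ when one of them is a boundary component) to produce a critical level curve lying $\prec$-above both, a contradiction. The only ingredient the paper makes explicit that you elide is that $\mathcal{C}$ is non-empty (via the Maximum Modulus Theorem), which your finiteness-gives-a-maximal-element step tacitly requires.
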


\begin{proof}
By the Maximum Modulus Theorem, $\mathcal{C}$ is non-empty.  Since $\mathcal{C}$ has finitely many components, it must have at least one $\prec$-maximal component.  Suppose by way of contradiction that there are two distinct components $L_1$ and $L_2$ of $\mathcal{C}$ which are both $\prec$-maximal.  Then each is in the unbounded component of the other, so Theorem~\ref{Two Level Curve Implies Crit. Level Curve.} guarantees that there is some other critical level curve $\Lambda'\subset\mathcal{C}$ such that $L_i\prec\Lambda'$ for $i=1,2$.  But this is a contradiction of the $\prec$-maximality of $L_1$ and $L_2$.
\end{proof}

More generally we have the following corollary.

\begin{corollary}\label{Existence of Prec Maximal Element in D}
If $D$ is a bounded face of $\Lambda$ then there is a unique component of $\mathcal{C}$ which is $\prec$-maximal with respect to $D$.
\end{corollary}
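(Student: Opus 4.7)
The plan is to parallel the proof of Corollary~\ref{Existence of Prec Maximal Element in G}, working within $D$ in place of $G$, and to use Theorem~\ref{Two Level Curve Implies Crit. Level Curve.} to rule out two distinct $\prec$-maximal components.

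First I would establish existence. To see that $\mathcal{C}\cap D\neq\emptyset$: if $\overline{D}\subset G$, then $f$ is meromorphic on $\overline{D}$ with $|f|\equiv\epsilon$ on $\partial D\subset\Lambda$, so the Maximum Modulus Theorem applied to both $f$ and $1/f$ forces $f$ to have a zero or pole in $D$; otherwise $D$ must contain a bounded component of $\partial G$, which is itself a component of $\mathcal{C}$. Since $\mathcal{C}$ has only finitely many components globally, it has only finitely many contained in $D$, so at least one is $\prec$-maximal with respect to $D$.

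For uniqueness, I would argue by contradiction: suppose $L_1$ and $L_2$ are two distinct $\prec$-maximal components of $\mathcal{C}$ contained in $D$. Each then lies in the unbounded face of the other, so Theorem~\ref{Two Level Curve Implies Crit. Level Curve.} produces some $w\in G$ with $f'(w)=0$ such that $L_1$ and $L_2$ lie in different bounded faces of $\Lambda_w$. Because two distinct bounded faces exist, $\Lambda_w$ is not a single point, so $|f(w)|\in(0,\infty)$ and $\Lambda_w$ is a genuine critical level curve, hence itself a component of $\mathcal{C}$ distinct from $L_1$ and $L_2$ (since they are disjoint from it), with $L_1\prec\Lambda_w$.

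The main obstacle will be verifying $\Lambda_w\subset D$, since the theorem only locates $w$ in the unbounded faces of $L_1$ and $L_2$, not a priori inside $D$. I plan to argue first that $\Lambda_w\cap\Lambda=\emptyset$: if $|f(w)|\neq\epsilon$ this is automatic, and if $|f(w)|=\epsilon$ then $\Lambda_w$ must be a component of $E_{f,\epsilon}$ different from $\Lambda$, since if $\Lambda_w=\Lambda$ then $L_1,L_2\subset D$ would both lie in the single bounded face $D$ of $\Lambda$, contradicting their being in different bounded faces of $\Lambda_w$. Thus $\Lambda_w$ is contained in a single face of $\Lambda$; if that face were not $D$, then the connected set $D$ (being disjoint from $\Lambda_w$) would lie in a single face of $\Lambda_w$, forcing $L_1,L_2\subset D$ into the same face of $\Lambda_w$, again a contradiction. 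Hence $\Lambda_w\subset D$, and $\Lambda_w$ witnesses a failure of the $\prec$-maximality of $L_1$ with respect to $D$.
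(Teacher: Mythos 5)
Your proof is correct and follows essentially the same route as the paper, which disposes of this corollary in one line by saying ``just replace $G$ with $D$'' and invoking Corollary~\ref{Existence of Prec Maximal Element in G}. The details you supply --- existence of a component of $\mathcal{C}$ in $D$ via the Maximum Modulus Theorem, and especially the topological check that $\Lambda_w$ is disjoint from $\Lambda$ and hence contained in $D$, so that it genuinely contradicts $\prec$-maximality \emph{with respect to $D$} --- are exactly the verifications the paper's one-line proof leaves implicit, and you carry them out correctly.
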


\begin{proof}
Just replace $G$ with $D$, and then the desired result is just the result from Corollary~\ref{Existence of Prec Maximal Element in G} in the new setting.
\end{proof}

In light of our following final result, the name "critical level curve" has new meaning.  The following theorem says that on any component of $G\setminus\mathcal{C}$, $f$ is very simple, being conformally equivalent to the function $z\mapsto{z^n}$ for some $n\in\{1,2,\ldots\}$.  First a bit of notation.

\begin{definition}
For any path $\gamma$, let $-\gamma$ denote the same curve $\gamma$ equipped with the opposite orientation.
\end{definition}

\begin{theorem}\label{f Conformally Equiv. to a Power of z}
Let $D$ be a component of $G\setminus\mathcal{C}$.  Then the following hold.

\begin{enumerate}
\item
$D$ is conformally equivalent to some annulus $A$.

\item
Let $E_1$ denote the inner boundary of $D$, and let $E_2$ denote the outer boundary of $D$.  Then there is some $\epsilon_1,\epsilon_2\in[0,\infty]$ such that $\epsilon_1\neq\epsilon_2$, and $|f|\equiv\epsilon_1$ on $E_1$, and $|f|\equiv\epsilon_2$ on $E_2$.

\item\label{item:Description of phi.}  Let $i_1,i_2\in\{1,2\}$ be chosen so that $\epsilon_{i_1}<\epsilon_{i_2}$.  Then there is some $N\in\{1,2,\ldots\}$ such that $A=ann(0;{\epsilon_{i_1}}^{\frac{1}{N}},{\epsilon_{i_2}}^\frac{1}{N})$, and some conformal mapping $\phi:D\to{ann(0;\epsilon_1^{\frac{1}{N}},\epsilon_2^{\frac{1}{N}})}$ such that $f\equiv\phi^M$ on $D$, where $M=\pm{N}$.

\item
The conformal map $phi$ described in Item~\ref{item:Description of phi.} extends continuously to $E_2$ and to all points in $E_1$ which are not zeros of $f'$.  If $z\in{E_1}$ is a zero of $f'$, and $\gamma:[0,1]\to{G}$ is a path such that $\gamma([0,1))\subset{D}$, and $\gamma(1)=z$, then $\displaystyle\lim_{r\to1}\gamma(r)$ exists.
\end{enumerate}
\end{theorem}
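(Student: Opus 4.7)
The plan is to prove the four items in sequence, with most of the work in item~3. For items~1 and~2, I will show $\partial D$ consists of exactly two components of $\mathcal{C}$. Since $\mathcal{C}$ has finitely many components, each a critical level curve, an isolated zero or pole, or a bounded component of $\partial G$, $\partial D$ is a finite union of such components on each of which $|f|$ is constant, giving item~2 once the count is correct. A single boundary component is impossible: $D$ would be simply connected in $\bar{\mathbb{C}}$, and the maximum modulus principle applied to $f$ (zero-, pole-, and critical-point-free in $D$ with constant modulus on $\partial D$) would force $f$ constant. Three or more is also impossible: a chain $C_1\prec C_2\prec C_3$ in $\partial D$ is ruled out since $D$ would need to contain points on both sides of $C_2$ while $C_2\subset\partial D$, so among three components at least two, say $C_2,C_3$, must be mutually exterior. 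Proposition~\ref{Level Curves are Separated from Compact Sets by Another Level Curve} then supplies non-critical level curves $L_2,L_3\subset D$ close to $C_2,C_3$ and still mutually exterior, and Theorem~\ref{Two Level Curve Implies Crit. Level Curve.} produces a critical level curve $\Lambda_w\subset\mathcal{C}$ separating them into different bounded faces; any path in $D$ between $L_2$ and $L_3$ must cross $\Lambda_w$, contradicting $D\cap\mathcal{C}=\emptyset$. So $D$ has exactly two boundary components and, being doubly connected, is conformally equivalent to an annulus by the classical uniformization theorem. The strict inequality $\epsilon_1\neq\epsilon_2$ follows since otherwise $\log|f|$ would be a harmonic function on $D$ with the same value on both boundary components, hence constant, forcing $f$ constant.

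For item~3, I will construct $\phi$ via the universal cover. Fix $\psi:D\to ann(0;r_1,r_2)$ conformal and set $F=f\circ\psi^{-1}$, a zero-free holomorphic map on the annulus. On the strip $S=\{w:\log r_1<\operatorname{Re}w<\log r_2\}$, the pullback $F(e^w)$ admits a single-valued holomorphic logarithm $L(w)$, and the relation $L(w+2\pi i)=L(w)+2\pi i M$ defines the winding number $M\in\mathbb{Z}$ of $F$ about $0$ along a generator of $\pi_1(ann(0;r_1,r_2))$. The argument principle forces $M\neq 0$: $|F|$ takes different constant values $\epsilon_1,\epsilon_2$ on the two boundary circles, making the integral of $F'/F$ along an intermediate circle nonzero. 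Then $\tilde\phi(w):=\exp(L(w)/M)$ is $2\pi i$-periodic, descends to the annulus, and $\phi:=\tilde\phi\circ\psi$ satisfies $\phi^M=f$ on $D$. Since $f$ is critical-point-free and zero-free on $D$, so is $\phi$, and $\phi$ is proper onto its image via the boundary behavior $|\phi|=|f|^{1/|M|}$. A degree count using $\deg(f)=|M|=\deg(z\mapsto z^M)\cdot\deg(\phi)$ forces $\deg(\phi)=1$, so $\phi$ is a biholomorphism onto $ann(0;\epsilon_1^{1/|M|},\epsilon_2^{1/|M|})$; setting $N=|M|$ and noting that $M=\pm N$ encodes whether $\phi$ preserves or reverses inner/outer orientation completes item~3.

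For item~4, Schwarz reflection across real-analytic boundary arcs on which $|f|$ is constant extends both $F$ and $\phi$ continuously to non-critical boundary points, which handles all non-critical points of $E_1$ and all of $E_2$ (where the $\prec$-maximality of $E_2$ in its enclosing region should ensure $D$ meets a neighborhood of any critical point on $E_2$ in only one sector). At a critical point $z_0\in E_1$ the local normal form $f(z)-f(z_0)=h(z)^{k}$, with $k=mult(z_0)+1$ and $h$ a local biholomorphism, means any path in $D$ terminating at $z_0$ eventually enters one sector around $z_0$, selecting one branch of the local $|M|$-th root and forcing $\phi$ to converge along the path. The main obstacle I anticipate is the covering-map bookkeeping in item~3---verifying $M\neq 0$, correctly descending the logarithm, and identifying the image radii and orientation---together with the degenerate case in which $E_1$ is an isolated zero or pole of $f$ so that $D$ is a punctured disk rather than a proper annulus, and the boundary-sector analysis guaranteeing continuous extension of $\phi$ through critical points of $f$ lying on $E_2$.
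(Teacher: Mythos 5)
Your proposal is correct in substance, and items~1, 2, and~4 follow essentially the same lines as the paper (with minor variants: you phrase the impossibility of a single boundary component and of $\epsilon_1=\epsilon_2$ via harmonicity of $\log|f|$, while the paper uses the maximum modulus principle directly; your ruling out of three or more boundary components is a somewhat longer case analysis than the paper's, which simply applies Theorem~\ref{Two Level Curve Implies Crit. Level Curve.} with $G$ replaced by $D$ to get a zero of $f'$ in $D$ from two bounded complementary components).

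Where you genuinely diverge is item~3. The paper works directly on $D$: it first shows, by a homotopy argument between nested level curves, that the change in $\arg f$ along any level curve in $D$ is the same constant $2\pi N$, then uses the Argument Principle to show the change in $\arg f$ along \emph{any} closed path in $D$ is a multiple of $2\pi N$, defines $\phi$ by analytic continuation of $f^{1/N}$ from a chosen basepoint, and verifies surjectivity onto the round annulus $ann(0;\epsilon_{i_1}^{1/N},\epsilon_{i_2}^{1/N})$ and injectivity by hand (the latter again via a monodromy computation along a level curve). You instead uniformize $D$, pull $F=f\circ\psi^{-1}$ back to a strip via $\exp$, extract a single-valued logarithm $L$ there, read $M$ off the monodromy $L(w+2\pi i)=L(w)+2\pi iM$, and descend $\exp(L/M)$; then you get injectivity from a degree count for proper maps, $\deg f = |M| = |M|\cdot\deg\phi$, rather than a direct argument. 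The two are mathematically equivalent --- both are constructing an $N$-th root of $f$ on $D$ after checking that the monodromy obstruction is divisible by $N$ --- but the paper's route is more elementary and self-contained, whereas yours leans on the degree theory of proper holomorphic maps between planar domains. Two small points worth making precise in your write-up: (a) the degree argument needs $\phi$ to be proper into $ann(0;\epsilon_{i_1}^{1/|M|},\epsilon_{i_2}^{1/|M|})$, which follows from $|\phi|=|F|^{1/M}$ and the boundary behavior of $|f|$, and you should also observe that a nonconstant proper map is \emph{onto}, which is what pins down the image as the round annulus and not merely a conformal copy; (b) in the degenerate case $\epsilon_{i_1}=0$ (when $E_1$ is a zero or pole of $f$), $D$ is a punctured disk and your strip picture needs the obvious modification, which you flag but do not carry out --- the paper is similarly silent on this. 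Your remark that extension of $\phi$ across critical points of $f'$ lying on $E_2$ requires $D$ to meet such a point in only one sector is a legitimate concern, and you only gesture at why this holds; to be fair, the paper's own proof of item~4 is equally brief and does not address critical points on $E_2$ at all.
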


\begin{proof}
Let $D$ be some component of $G\setminus\mathcal{C}$.  Since $D$ does not contain a zero or pole of $f$, the Maximum Modulus Theorem implies that $D^c$ must have at least one bounded component.  Suppose by way of contradiction that $D^c$ has two distinct bounded components.  Replacing $G$ with $D$ in the statement of Theorem~\ref{Two Level Curve Implies Crit. Level Curve.}, we may conclude that $D$ contains a zero of $f'$, which is a contradiction because all zeros of $f'$ in $G$ are contained in $\mathcal{C}$.  We conclude that $D^c$ has exactly one bounded component.  Thus $D$ is conformally equivalent to an annulus (see for example~\cite{C2}).  Let $E_1$ denote the interior boundary of $D$ and $E_2$ denote the exterior boundary of $D$.  Each component of the boundary of $D$ is contained in a level curve of $f$ or a component of $\partial{G}$.  Therefore we may define $\epsilon_1\in[0,\infty]$ to be the value of $|f|$ on $E_1$ and $\epsilon_2\in[0,\infty]$ to be the value of $|f|$ on $E_2$.  By the Maximum Modulus Theorem since $D$ does not contain a zero or pole of $f$ and $D\subset{G}$, we may conclude that $\epsilon_1\neq\epsilon_2$.  Assume throughout that $\epsilon_1<\epsilon_2$, otherwise make the appropriate minor changes.

\begin{claim}\label{Same Change in Arg(f) along any Level Curve in D}
There is some $N\in\{\pm1,\pm2,\ldots\}$ such that for any $z\in{D}$, the change in $\arg(f)$ as $\Lambda_z$ is traversed in the positive direction is exactly $2\pi{N}$.
\end{claim}

Let $z_1,z_2\in{D}$ be given.  Since $D$ contains no critical points of $f$, $\Lambda_{z_1}$ and $\Lambda_{z_2}$ are non-critical level curves of $f$, and Theorem~\ref{Two Level Curve Implies Crit. Level Curve.} implies that either $\Lambda_{z_1}\prec\Lambda_{z_2}$ or $\Lambda_{z_2}\prec\Lambda_{z_1}$.  Rename $z_1$ and $z_2$ if necessary so that $\Lambda_{z_1}\prec\Lambda_{z_2}$.  By the Maximum Modulus Theorem, the bounded face of $\Lambda_{z_1}$ contains some point in $\mathcal{C}$.  However $D$ contains no points of $\mathcal{C}$, so $E_1$ is a contained in the bounded face of $\Lambda_{z_1}$.  Let Figure~\ref{fig:D,z_1,z_2} represent this choice of $D$, $z_1$, and $z_2$.  Let $\sigma$ denote the path in Figure~\ref{fig:D with sigma}.

\begin{figure}[ht]
\begin{minipage}[b]{0.45\linewidth}
\centering
\includegraphics[width=\textwidth]{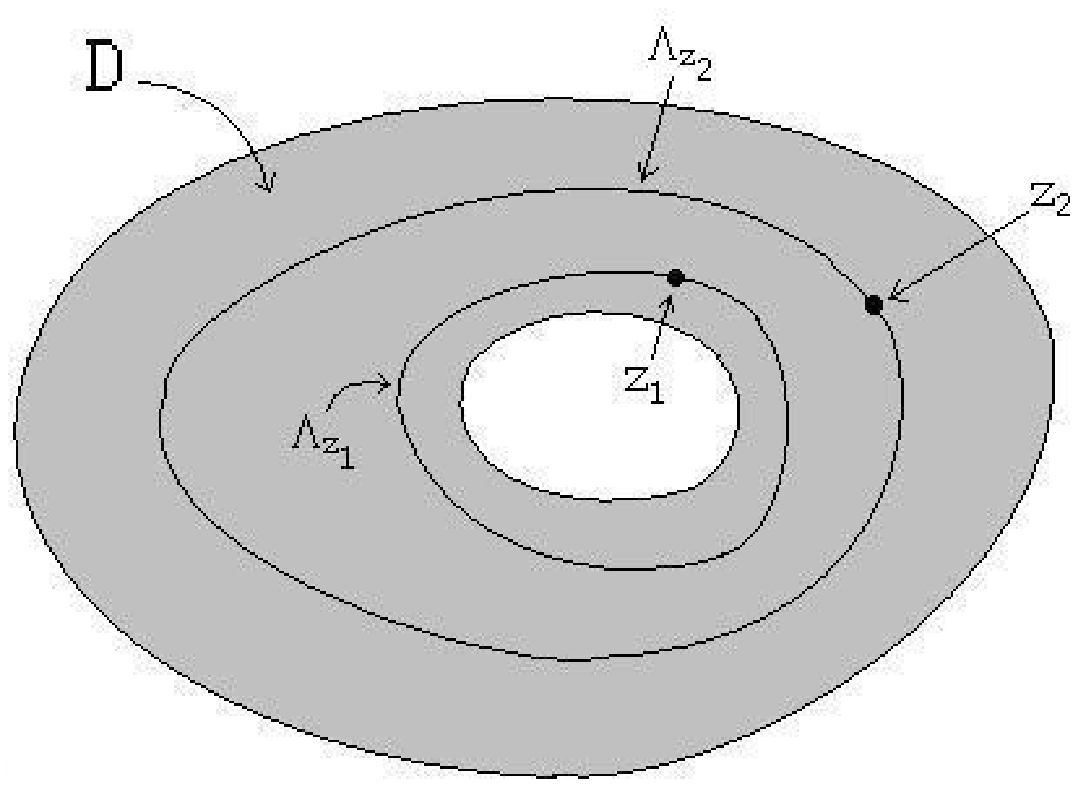}
\caption{$D$, $z_1$, and $z_2$.}
\label{fig:D,z_1,z_2}
\end{minipage}
\hspace{0.5cm}
\begin{minipage}[b]{0.45\linewidth}
\centering
\includegraphics[width=\textwidth]{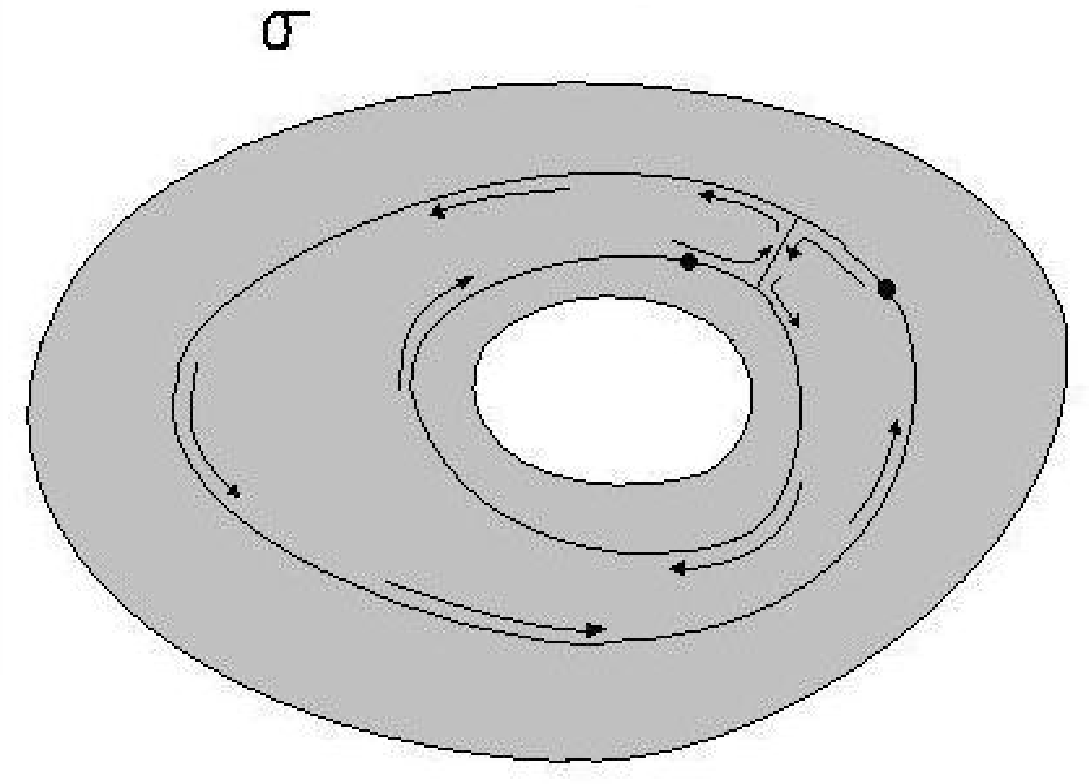}
\caption{Definition of $\sigma$.}
\label{fig:D with sigma}
\end{minipage}
\end{figure}

Since $\sigma$ may be retracted to a point in $D$, the total change in $\arg(f)$ along $\sigma$ is zero.  Therefore the sum of the changes in $\arg(f)$ along $\Lambda_{z_2}$ and $-\Lambda_{z_1}$ equals zero.  Therefore the change in $\arg(f)$ along $\Lambda_{z_1}$ is the same as the change in $\arg(f)$ along $\Lambda_{z_2}$.  We conclude that the change in $\arg(f)$ along $\Lambda_z$ is independent of the choice of $z\in{D}$.  Let $\beta$ denote this common number.  It is well known (see for example~\cite{C1}) that if $\gamma$ is any closed path in $\mathbb{C}$ and $g$ is a function analytic on $\gamma$ with no zeros on $\gamma$, the change in $\arg(g)$ along $\gamma$ is well defined and there is some $n\in\mathbb{Z}$ such that the change in $\arg(g)$ along $\gamma$ is $2\pi{n}$.  Thus we may choose $N\in\mathbb{Z}$ such that $\beta=2\pi{N}$.  By the same argument as above, the changes in $\arg(f)$ along $E_1$ and $E_2$ are both $2\pi{N}$ as well.

Furthermore since $D$ does not contain any zero of $f'$, $f$ is injective at each point in $D$, so if $L$ is some level curve of $f$ in $D$, $\arg(f)$ is either strictly increasing or strictly decreasing as $L$ is traversed in the positive direction.  Therefore $N\neq0$, so $N\in\{\pm1,\pm2,\ldots\}$.

Let us now adopt the convention that unless otherwise specified, any time we are referring to a level curve of $f$ in $D$ as a path, we are traversing that level curve in the direction in which $\arg(f)$ is increasing.  By a similar argument to the one in the claim above one may show that this is either the positive orientation for all level curves of $f$ in $D$ or the negative orientation for all level curves of $f$ in $D$.  Thus with this assumed orientation we may say that the change in $\arg(f)$ along any level curve $L$ in $D$ is $2\pi{N}$ for some $N\in\{1,2,\ldots\}$ which is independent of $L$.

\begin{claim}
Let $\gamma$ be a closed path in $D$.  Then the change in $\arg(f)$ along $\gamma$ is an integer multiple of $2\pi{N}$.
\end{claim}

Let $k$ denote the number of times $\gamma$ winds around $E_1$.  Then since $D$ contains no zeros or poles of $f$, the Argument Principle implies that the change in $\arg(f)$ along $\gamma$ is $k$ times the change in $\arg(f)$ along $E_1$.  Thus the change in $\arg(f)$ along $\gamma$ is $k2\pi{N}$.

We now wish to define the conformal mapping described in the statement of the theorem.  Fix some $z_0$ in $D$ at which $f$ takes a positive real value.  (To see that such a point $z_0$ exists, observe that for any $z\in{D}$, the change in $\arg(f)$ along $\Lambda_z$ is $2\pi{N}$, so there are $N$ different points in $\Lambda_z$ at which $f$ takes positive real values.)  We wish to define a map $\phi:D\to\mathbb{C}$ which we will show is a conformal map with $\phi(D)=ann(0;{\epsilon_1}^{\frac{1}{N}},{\epsilon_2}^{\frac{1}{N}})$.  For $w\in{D}$, let $\gamma:[0,1]\to{D}$ be a path such that $\gamma(0)=z_0$ and $\gamma(1)=w$.  Define $\alpha$ to be the change in $\arg(f)$ along $\gamma$.  Then define $\phi(w)\colonequals|f(w)|^{\frac{1}{N}}e^{i\frac{\alpha}{N}}$.  This may be shown to be the value at $w$ of analytic continuation of $f^{\frac{1}{N}}$ along the path $\gamma$, and thus (if well defined) is analytic in a neighborhood of $w$.  Therefore if $\phi$ defined as such is well defined, then $\phi$ is analytic on $D$.  Let $w\in{D}$ be given, and let $\gamma_1$ and $\gamma_2$ be paths in $D$ from $z_0$ to $w$.  Let $\phi_1(w)$ denote the value of $\phi$ at $w$ obtained using $\gamma=\gamma_1$ and let $\phi_2(w)$ denote the value of $\phi$ at $w$ obtained by using $\gamma=\gamma_2$.  Let $\alpha_1$ denote the change in $\arg(f)$ along $\gamma_1$ and let $\alpha_2$ denote the change in $\arg(f)$ along $\gamma_2$.  Let $\gamma_3$ denote the path in $D$ obtained by traversing $\gamma_1$ and then traversing $-\gamma_2$.  $\gamma_3$ is a closed path in $D$, so by the claim above, the change in $\arg(f)$ along $\gamma_3$ (which is the change in $\arg(f)$ along $\gamma_1$ minus the change in $\arg(f)$ along $\gamma_2$) is an integer multiple of $2\pi{N}$.  Thus $\alpha_1=\alpha_2+k2\pi{N}$ for some $k\in\mathbb{Z}$.  Thus

\[
\phi_1(w)=|f(w)|^{\frac{1}{N}}e^{i\frac{\alpha_1}{N}}=|f(w)|^{\frac{1}{N}}e^{i\frac{\alpha_2+k2\pi{N}}{N}}= |f(w)|^{\frac{1}{N}}e^{i\frac{\alpha_2}{N}}e^{\frac{k2\pi{N}}{N}}=\phi_2(w).
\]

Therefore whether we define $\phi(w)$ using $\gamma_1$ or using $\gamma_2$ we obtain the same value.  (Note that we are essentially just showing that we may take an $N^{th}$ root of $f$ on $D$.)

\begin{claim}
$\phi(D)=ann(0;{\epsilon_1}^{\frac{1}{N}},{\epsilon_2}^{\frac{1}{N}})$.
\end{claim}

Note that by the Maximum Modulus Theorem, for each $z\in{D}$, since $D\subset{G}$ and $D$ does not contain any zero or pole of $f$, $|f(z)|\in(\epsilon_1,\epsilon_2)$, and thus $|\phi(z)|\in({\epsilon_1}^{\frac{1}{N}},{\epsilon_2}^{\frac{1}{N}})$.  Therefore $\phi(D)\subset{ann}(0;{\epsilon_1}^{\frac{1}{N}},{\epsilon_2}^{\frac{1}{N}})$.

Let $\xi\in{ann}(0;{\epsilon_1}^{\frac{1}{N}},{\epsilon_2}^{\frac{1}{N}})$ be given.  Choose some $x\in{D}$ such that $|f(x)|^{\frac{1}{N}}=|\xi|$.  Such an $x$ exists because $|f|\equiv\epsilon_1$ on $E_1$ and $|f|\equiv\epsilon_2$ on $E_2$, and $|f|$ is continuous.  Let $\rho\in[0,2\pi)$ be such that $\arg(\xi)=\arg(\phi(x))+\rho$.  (And thus $\phi(x)e^{i\rho}=\xi$.)  Since the change in $\arg(f)$ along $\Lambda_x$ is $2\pi{N}$ with $N\neq0$, there is some point $x'$ in $\Lambda_x$ such that if $\Lambda_x$ is traversed from $x$ to $x'$, then the change in $\arg(f)$ along this portion of $\Lambda_x$ is exactly $\rho{N}$.  One can easily then show that $\phi(x')=\xi$.

Therefore we conclude that $\phi(D)=ann(0;{\epsilon_1}^{\frac{1}{N}},{\epsilon_2}^{\frac{1}{N}})$.

\begin{claim}
$\phi$ is injective.
\end{claim}

Let $w_1,w_2\in{D}$ be given such that $\phi(w_1)=\phi(w_2)$.  From the definition of $\phi$, $|f(w_1)|=|f(w_2)|$.  Therefore $w_1$ and $w_2$ are in the same level curve of $f$.  The Maximum Modulus Theorem and Theorem~\ref{Two Level Curve Implies Crit. Level Curve.} together imply that there is only one level curve of $f$ in $D$ on which $|f|$ takes the value $|f(w_1)|$.  Thus we conclude that there is some level curve $L$ of $f$ in $D$ which contains both $w_1$ and $w_2$.  Let $\gamma_1$ be a path in $D$ from $z_0$ to $w_1$.  Let $\gamma_2$ be a path obtained by traversing $L$ in the direction of increase of $\arg(f)$ from $w_1$ to $w_2$.  (If $w_1=w_2$ let $\gamma_2$ be constant.)  Let $\gamma_3$ denote the path from $z_0$ to $w_2$ obtained by first traversing $\gamma_1$ and then traversing $\gamma_2$.  Let $\alpha_i$ denote the change in $\arg(f)$ along $\gamma_i$ for $i\in\{1,2\}$.  Since $\arg(f)$ is strictly increasing as $\gamma_2$ is traversed, and the total change in $\arg(f)$ as all of $L$ is traversed is $2\pi{N}$, we conclude that $\alpha_2\in[0,2\pi{N})$.  Since $\phi(w_1)=\phi(w_2)$, we have the following equation, calculating $\phi(w_1)$ using $\gamma_1$, and calculating $\phi(w_2)$ using $\gamma_3$.

\[
|f(w_1)|^{\frac{1}{N}}e^{i\frac{\alpha_1}{N}}=\phi(w_1)=\phi(w_2)=|f(w_2)|^{\frac{1}{N}}e^{i\frac{\alpha_1+\alpha_2}{N}}=|f(w_1)|^{\frac{1}{N}}e^{i\frac{\alpha_1}{N}}e^{i\frac{\alpha_2}{N}}.
\]

Dividing both sides by $|f(w_1)|^{\frac{1}{N}}e^{i\frac{\alpha_1}{N}}$, we obtain $1=e^{i\frac{\alpha_2}{N}}$.  Thus $\alpha_2$ is an integer multiple of $2\pi{N}$.  Since $\alpha_2\in[0,2\pi{N})$, we conclude that $\alpha_2=0$, and thus $w_1=w_2$.

It remains to show that $\phi$ extends continuously all points in the boundary of $D$ except possibly the zeros of $f'$ in $E_1$.  To see this we just observe that the definition of $\phi$ may be defined in the same way as above for all points in $\partial{D}$.  This extension is well defined by the same argument used above except at the zeros of $f'$ in $E_1$.  If $z$ is one of the zeros of $f'$ in $E_1$, then for any path $\gamma:[0,1]\to{G}$ such that $\gamma([0,1))\subset{D}$, and $\gamma(1)=z$, it may be shown using the continuity of $f$ that $\displaystyle\lim_{r\to1^-}\phi(\gamma(r))$ exists.
\end{proof}

\bibliographystyle{plain}
\bibliography{refs}

\end{document}